


\documentclass[final,1p,times]{elsarticle}


\usepackage{amssymb}
\usepackage{amsthm}
\usepackage{mathrsfs}

\usepackage{mathptmx}
\usepackage{amsmath,amsfonts}
\usepackage{url,hyperref,multirow}
\usepackage{float}
\usepackage{color}
\usepackage{epsfig,bm,caption2,epstopdf}

\usepackage{setspace}
\usepackage{exscale}
\usepackage{relsize}
\usepackage{booktabs}
\usepackage{ifpdf}

\usepackage{dsfont}

\newtheorem*{notation}{Notation}
\newtheorem{theorem}{Theorem}[section]
\newtheorem{remark}{Remark}[section]

\newtheorem{lemma}{Lemma}[section]
\newtheorem{example}{Example}[section]


\parskip 0.1cm
\setlength{\textwidth}{15.8cm}
\setlength{\oddsidemargin}{0cm}
\setlength{\textheight}{25.0cm}
\setlength{\topmargin}{-2cm}

\journal{}

\begin{document}
	
	\begin{frontmatter}
		
		
		
		\title{An efficient numerical scheme for two-dimensional nonlinear time fractional Schr\"{o}dinger equation}
		
		
		
		\author[OUC]{Jun Ma }
		\ead{19819758528@163.com}
		
		\author[SLUAF]{Tao Sun}
		\ead{sunt@lixin.edu.cn}
		
		\author[OUC]{Hu Chen \corref{chen}}
		\ead{chenhu@ouc.edu.cn} \cortext[chen]{Corresponding author.}

		
		%
		

		\address[OUC]{School of Mathematical Sciences, Ocean University of China, Qingdao 266100, China}
		\address[SLUAF]{School of Statistics and Mathematics, Shanghai Lixin University of Accounting and Finance, Shanghai 201209, China}
		\begin{abstract}
			In this paper, a linearized fully discrete scheme is proposed to solve the two-dimensional nonlinear time fractional Schr\"{o}dinger equation with weakly singular solutions, which is constructed by using L1 scheme for Caputo fractional derivative, backward formula for the approximation of nonlinear term and five-point difference scheme in space. We rigorously prove the unconditional stability and pointwise-in-time convergence of the fully discrete scheme, which does not require any restriction on the grid ratio. Numerical results are presented to verify the accuracy of the theoretical analysis.
		\end{abstract}
		
		\begin{keyword}
			
			Nonlinear time fractional Schr\"{o}dinger equation\sep  L1 scheme\sep  Local error estimate \sep unconditional stability
			
			\MSC[2020] Primary 65M15, 65M06
		\end{keyword}
		
	\end{frontmatter}
	
	

	\section{Introduction}
	In recent years, due to the widespread applications and profound influences in physical fields such as quantum mechanics, fractional dynamics, see \cite{Tofighi,Iomin1,Iomin,Laskin,Lenzi,Naber}, the time fractional Schr\"{o}dinger equation has become an indispensable tool in scientific research and continues to attract numerous scholars for in-depth exploration and extensive attention. \\
	\indent Consider the following two-dimensional nonlinear time fractional Schr\"{o}dinger equation (NTFS) with corresponding boundary and initial conditions:
	\begin{align}
		&\text{i}D_t^\alpha u+\Delta u+f(|u|^2)u=0,&&(x,y)\in\Omega, 0<t\leq T,\label{eqution1}\\&u(x,y,0)=u_0(x,y),&&(x,y)\in\Omega,\label{eqution2}\\&u(x,y,t)=0,&&(x,y)\in\partial\Omega, 0\leq t\leq T,\label{eqution3}
	\end{align}
	where $u = u(x,y,t)$ is the complex-valued function, $0<\alpha<1,$ the imaginary unit $\text{i}^2=-1$, the domain $\Omega:=(0,L)^2$, $u_0\in C(\bar{\Omega})$ and $f \in C^1(\mathbb{R}^+)$ is a given real-valued function. In \eqref{eqution1}, $D_t^\alpha$ represents standard Caputo fractional derivative operator (see \cite{Diethelm}), which is defined by
	\begin{align}
		D_t^\alpha u(x,y,t)=\frac1{\Gamma(1-\alpha)}\int_0^t(t-\xi)^{-\alpha}\frac{\partial u(x,y,\xi)}{\partial \xi}\mathrm{d}\xi,\quad t>0.
	\end{align}
	\indent As $\alpha$ approaches 1, the fractional derivative $D_t^\alpha u$ converges to the first-order derivative $\partial u/\partial t$, causing \eqref{eqution1} reducing to the integer-order nonlinear Schr\"{o}dinger equation (NLS). For numerical methods on solving NLS, one can refer to \cite{Antoine,Chang,Henning,LiXiangGui,WangTingchun}. Recently, there have been research methods for the analytical and numerical solutions of the  time fractional Schr\"{o}dinger equation (NTFS). For example, Li et al. \cite{Li} proposed a linearized numerical scheme incorporating the L1 method for approximating the Caputo time-fractional derivative in
	temporal direction and the Galerkin finite element method in spatial discretization, and they obtained the optimal error estimate by using time-space error splitting argument. Liu et al. \cite{Liu} introduced the linear Alikhanov finite difference scheme for solving nonlinear time fractional Schr\"{o}dinger equation. Wang et al. \cite{Wang} developed linearized L2-$1_\sigma$ and FL2-$1_\sigma$ formulas, and proved the unconditional stability and optimal error estimates of these two numerical schemes for solving NTFS. Chen et al. \cite{ChenXL} proposed two linearized compact ADI methods to solve the two-dimensional nonlinear time fractional Schr\"{o}dinger equation. For more numerical methods to solve NTFS, one can refer to \cite{LiMeng,Mohebbi,Wei}. All above works were obtained by assuming that the solutions are sufficiently smooth, which are too restrictive due to the singularity of the initial layer are typical for time fractional derivative problems. More details can be seen in \cite{Kopteva,Martin}. Then, taking into account the initial singularity, some researchers conducted numerical studies on time fractional Schr\"{o}dinger equations. Zhang et al. \cite{Zhangchen20} conducted an error analysis of a fully discrete scheme for linearized time fractional Schr\"{o}dinger equation with initial singularity.
	Hu et al. \cite{Hu} developed a two-grid finite element method with nonuniform L1 scheme for solving the nonlinear time-fractional Schr\"{o}dinger equation. Qin et al. \cite{Qin} proposed Alikhanov linearized Galerkin methods for solving the NTFS with non-smooth solutions, and derived unconditionally optimal estimates for the fully discrete scheme using fractional time-space splitting. In \cite{Yuan}, Yuan et al. presented a linearized transformed L1 Galerkin finite element method and proved unconditionally optimal error estimates through a combination of new discrete fractional Gr\"{o}nwall inequalities and Sobolev embedding theorems.  In \cite{Yuanli2023}, the authors proposed a high order fast time-stepping method for the time-space nonlinear fractional {S}chr\"{o}dinger equations. In our previous work \cite{MaChen24}, we established local error estimate of L1 scheme on graded mesh for linearized time fractional chr\"{o}dinger equation.\\
	\indent However, to our knowledge, there has been no research on the local convergence problem of NTFS with non-smooth solutions. Therefore, we will establish local error estimate without any restriction on the grid ratio for the NTFS under weaker assumptions of regularity. In this paper, we construct a linearized L1 finite difference scheme for NTFS. Then, under weaker conditions $f \in C^1(\mathbb{R}^+)$ (specifically, $f \in C^1(\mathbb{R}^+)$ compared to $f \in C^3(\mathbb{R}^+)$ in \cite{Li,Qin,Wang}, $f \in C^2(\mathbb{R})$ in \cite{ChenXL,Hu}, and $f \in C^1(\mathbb{R}^+)$ in \cite{Liu,Yuanli2023}), and combining with inverse Sobolev inequalities and a discrete fractional Gr\"{o}nwall-type inequality, the local error estimate of the proposed scheme is established without any restriction on the grid ratio.\\
	\indent The rest of this paper is structured as follows.
	In Section \ref{section2}, a linearized fully discrete numerical scheme is proposed for problem \eqref{eqution1} and some preliminary lemmas used to prove error estimates are stated.
	In Section \ref{section3}, the main results of convergence and  stability analysis of our scheme are provided. We use several numerical tests to illustrate the accuracy of our theoretical analysis in Section \ref{section4}. The final part ends this paper with a brief conclusion.
	\begin{notation}
		Throughout the paper, $C$ is a generic constant, and the value of $C$ may change from line to line, but is
		always independent of the mesh sizes.		
	\end{notation}

	\section{The linearized fully discrete method and preliminary lemmas}\label{section2}
	For a positive integer $N$, let the time step size $\tau=T/N$, and denote grid points $t_n=n\tau$ for $n=0,1,\dots,N$.
	Grid points of the spatial mesh are given by $x_j=jh, y_k=kh, 0\leq j, k\leq M$, where $h=L/M$ and $M$ is a positive integer. We use $u_{j,k}^n$ and $U_{j,k}^n$ to represent the exact value and nodal approximation of the solution $u(x_j,y_k,t_n)$ for all $j,k$ and $n$.\\
	\indent Then we introduce the well-known L1 scheme to approximate the Caputo fractional derivative $D_t^\alpha u(x,y,t)$ of $u$ with order $\alpha$:
	\begin{equation}\label{eqution4}
		\begin{aligned}
			D_t^\alpha u(x_j,y_k,t_n)&=
			\frac{1}{\Gamma(1-\alpha)}\int_{0}^{t_{n}}\frac{u'(x_j,y_k,s)}{(t_{n}-s)^{\alpha}}ds\\ &\approx\frac{1}{\Gamma(1-\alpha)}\sum_{i=0}^{n-1}\int_{t_i}^{t_{i+1}}\frac{u_{j,k}^{i+1}-u_{j,k}^i}{\tau}\frac{1}{(t_n-s)^\alpha}ds \\ &=\frac1\mu\Big(a_0u_{j,k}^n-\sum_{i=1}^{n-1}(a_{n-i-1}-a_{n-i})u_{j,k}^i-a_{n-1}u_{j,k}^0\Big)\\
			&:=D_\tau^\alpha u_{j,k}^n,
		\end{aligned}
	\end{equation}
	where $$\mu=\tau^{\alpha}\Gamma(2-\alpha),\quad a_{i}=(i+1)^{1-\alpha}-i^{1-\alpha}, \quad i\geq0.$$
	
	Then define the positive multipliers $\theta_n$ (see \cite[Eq.(11)]{Chen}) by
	$$\theta_0=\left( \frac1\mu a_0\right) ^{-1},\quad\theta_n=a_0^{-1}\sum_{i=1}^n(a_{i-1}-a_i)\theta_{n-i},\quad1\leq n\leq N-1.$$
	
	\begin{lemma} \label{lemma2.1}
		\cite[Corollary 16]{Chen} The sequence $\theta_n$ is monotonically decreasing and satisfies
		\begin{align*}
			\theta_n\leq\Gamma(2-\alpha)\tau^\alpha(n+1)^{\alpha-1}.
		\end{align*}
	\end{lemma}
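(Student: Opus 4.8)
The plan is to first recast the defining recursion as a single discrete convolution (``complementary'') identity and then run two inductions, one for monotonicity and one for the bound. I would prove by induction on $m$ that the recursion $a_0\theta_n=\sum_{i=1}^n(a_{i-1}-a_i)\theta_{n-i}$ together with $\theta_0=\mu/a_0$ is equivalent to
\begin{equation*}
\sum_{l=0}^m a_l\,\theta_{m-l}=\mu,\qquad m\ge 0 .
\end{equation*}
Indeed, writing $\sum_{i=1}^m(a_{i-1}-a_i)\theta_{m-i}=\sum_{j=0}^{m-1}a_j\theta_{m-1-j}-\bigl(\sum_{l=0}^m a_l\theta_{m-l}-a_0\theta_m\bigr)$ and using the induction hypothesis at $m-1$ collapses the recursion to the displayed identity; the base case $m=0$ is $a_0\theta_0=\mu$, and since $a_0=1^{1-\alpha}-0^{1-\alpha}=1$ we read off $\theta_0=\mu=\Gamma(2-\alpha)\tau^\alpha$, so the asserted bound already holds with equality at $n=0$. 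Subtracting this identity at indices $n$ and $n-1$ yields the equivalent forward form
\begin{equation*}
\theta_n=\sum_{p=0}^{n-1}(a_{n-1-p}-a_{n-p})\,\theta_p,\qquad n\ge1 ,
\end{equation*}
whose weights $a_{n-1-p}-a_{n-p}$ are nonnegative; this nonnegative-weight form is the one I would use for the upper bound.

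Next I would record the structural properties of the kernel $a_i=(i+1)^{1-\alpha}-i^{1-\alpha}$ that drive both inductions. With $g(x)=x^{1-\alpha}$, concavity of $g$ on $(0,\infty)$ (here $0<1-\alpha<1$) shows that $a_i=g(i+1)-g(i)$ is positive and strictly decreasing, while $g'''>0$ forces the third differences of $g$ to be positive, i.e.\ the sequence $(a_i)$ is convex: $a_{i-1}-2a_i+a_{i+1}\ge0$, equivalently the increments $a_{i-1}-a_i$ are positive and decreasing, with $\sum_{i=1}^n(a_{i-1}-a_i)=a_0-a_n=1-a_n<1$. It is also worth noting, as a structural check and source of slack, that the generating function satisfies $\sum_n\theta_n z^n=\mu\bigl[(1-z)\sum_i a_i z^i\bigr]^{-1}$, whose behaviour as $z\to1^-$ gives $\theta_n\sim \mu\,n^{\alpha-1}/(\Gamma(\alpha)\Gamma(2-\alpha))$; since $\Gamma(\alpha)\Gamma(2-\alpha)>1$ for $0<\alpha<1$, this confirms the claimed estimate $\theta_n\le\mu(n+1)^{\alpha-1}$ holds asymptotically with room to spare.

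For monotonicity I would argue by induction on $n$, assuming $\theta_0\ge\cdots\ge\theta_{n-1}$ and proving $\theta_{n-1}\ge\theta_n$. Applying the forward form at $n$ and $n-1$ and summing by parts expresses $\theta_{n-1}-\theta_n$ as $\sum_{p=0}^{n-2}(a_{n-2-p}-2a_{n-1-p}+a_{n-p})\theta_p-(a_0-a_1)\theta_{n-1}$, in which every prefactor is nonnegative by the convexity of $(a_i)$ and every $\theta_p\ge\theta_{n-1}$ by the induction hypothesis. Closing the sign is delicate: merely replacing $\theta_p$ by $\theta_{n-1}$ leaves a deficit, so one must use the convexity of $(a_i)$ to weigh the excesses $\theta_p-\theta_{n-1}$ against the boundary term, which is the point where monotonicity of $(a_i)$ alone is insufficient and its convexity is essential.

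Finally, for the bound I would prove $\theta_n\le\mu(n+1)^{\alpha-1}$ by induction through the nonnegative-weight form. Substituting $\theta_p\le\mu(p+1)^{\alpha-1}$ reduces the inductive step to the scalar inequality
\begin{equation*}
\sum_{l=0}^{n-1}(a_l-a_{l+1})\,(n-l)^{\alpha-1}\le (n+1)^{\alpha-1} .
\end{equation*}
This is the crux and the main obstacle, because the constant on the right is sharp (equal to $\Gamma(2-\alpha)$ after restoring $\mu$): crude termwise majorizations such as $a_l-a_{l+1}\le\alpha(1-\alpha)l^{-\alpha-1}$ and $(n-l)^{\alpha-1}-(n-l+1)^{\alpha-1}\le(1-\alpha)(n-l)^{\alpha-2}$ lose too much and can even suggest the wrong sign. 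My route would be to summation-by-parts the left-hand side into $n^{\alpha-1}+\sum_{l=1}^{n-1}a_l\bigl((n-l)^{\alpha-1}-(n-l+1)^{\alpha-1}\bigr)-a_n$, exposing the cancellation between the positive middle sum and the negative boundary term $-a_n$, and then to control the remaining differences of $g(x)=x^{1-\alpha}$ and of $h(x)=x^{\alpha-1}$ by their exact mean-value (integral) representations, using the concavity of $g$ and the convexity of $h$ to obtain a two-sided estimate that is tight enough to beat $(n+1)^{\alpha-1}-n^{\alpha-1}$. Combined with the monotonicity established above, this yields the stated pointwise decay for every $n$.
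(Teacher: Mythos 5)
Your proposal has a genuine gap --- in fact two, and they are precisely the two steps that constitute the lemma. Your preparatory work is correct: the induction establishing the complementary identity $\sum_{l=0}^{m}a_l\theta_{m-l}=\mu$ is sound, the positivity, decrease and convexity of $(a_i)$ follow from $g(x)=x^{1-\alpha}$ as you say, and the reduction of the bound to the scalar inequality $\sum_{l=0}^{n-1}(a_l-a_{l+1})(n-l)^{\alpha-1}\le(n+1)^{\alpha-1}$ is a valid inductive step. But neither core inequality is actually proved. For monotonicity, your own computation shows that inserting the induction hypothesis $\theta_p\ge\theta_{n-1}$ into $\theta_{n-1}-\theta_n=\sum_{p=0}^{n-2}c_p(\theta_p-\theta_{n-1})-(a_{n-1}-a_n)\theta_{n-1}$ leaves the negative deficit $-(a_{n-1}-a_n)\theta_{n-1}$, and the proposed repair (``weigh the excesses against the boundary term'') requires a \emph{quantitative lower} bound on the excesses $\theta_p-\theta_{n-1}$, which the qualitative hypothesis ``$\theta$ is decreasing so far'' cannot supply. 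Matching asymptotics shows why this is not a cosmetic omission: $c_p\sim C(n-p)^{-\alpha-2}$ and $\theta_p\sim c\mu (p+1)^{\alpha-1}$ make both $\sum_p c_p(\theta_p-\theta_{n-1})$ and $(a_{n-1}-a_n)\theta_{n-1}$ of order $n^{-2}$, so the conclusion hinges on constants your argument never controls; as written, the induction cannot be closed without importing sharp information about $\theta_p$ (e.g.\ the very bound you prove afterwards, or an exact representation of $\theta_n$), which entangles the two inductions you present as separate. For the upper bound the situation is the same: you correctly identify the scalar inequality as ``the crux and the main obstacle'' and correctly observe that it is sharp (at $n=1$ it reduces to $(1-2^{1-\alpha})^2\ge 0$, with equality as $\alpha\to 1$), but the route offered --- summation by parts plus mean-value representations ``tight enough'' --- is a description of a strategy, not an argument; no estimate is exhibited that beats the sharp constant. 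The generating-function asymptotics are a useful sanity check but prove nothing for finite $n$.

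For context, the paper itself does not prove this lemma: it is quoted verbatim from \cite[Corollary 16]{Chen}, with the multipliers defined as in \cite[Eq.~(11)]{Chen}, so there is no internal proof to compare against. That makes the standard for a from-scratch proof higher, not lower: your write-up reproduces the standard framework (complementary kernels, convexity of the L1 weights) but stops exactly where the cited reference does the real work. To complete it you would need either to prove the scalar inequality honestly (this can be done, but requires a careful two-sided comparison of the discrete convolution with the Beta-function integral $\int_0^1(1-s)^{-\alpha}s^{\alpha-1}\,ds$, not termwise bounds), or to abandon the double induction in favour of an explicit representation/comparison argument for $\theta_n$ as in the reference.
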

	\begin{lemma} \label{lemma2.2}
			\cite[Lemma 2.1]{Liao} For $n = 1,\dots , N$, one has
			\begin{align*}
				\sum_{i=1}^n\theta_{n-i}\leq\frac{t_n^\alpha}{\Gamma(1+\alpha)}.
			\end{align*}
			\begin{proof}
				It can be easily proved by taking $v(t)=t^\alpha$ in    \cite[Lemma 2.1, Eq. (2.8) ]{Liao}.
			\end{proof}
	\end{lemma}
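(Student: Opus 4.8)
The plan is to recognize that the multipliers $\theta_n$ are exactly the complementary discrete convolution kernels of the L1 operator $D_\tau^\alpha$, and then to test this structure against the function $v(t)=t^\alpha$, whose Caputo derivative is the constant $\Gamma(1+\alpha)$. First I would rewrite \eqref{eqution4} in convolution form $D_\tau^\alpha v^n=\frac1\mu\sum_{k=1}^n a_{n-k}(v^k-v^{k-1})$ and extract from the recursive definition of $\theta_n$ the orthogonality relation $\frac1\mu\sum_{j=k}^n\theta_{n-j}a_{j-k}=1$ for every $1\le k\le n$; in generating-function terms this is precisely the identity $\theta\ast a=\mu\,\mathbf 1$, which one verifies is equivalent to $\theta_0=\mu/a_0$ together with $\theta_n=a_0^{-1}\sum_{i=1}^n(a_{i-1}-a_i)\theta_{n-i}$. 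Interchanging the order of summation then gives the telescoping identity
\begin{equation*}
	\sum_{j=1}^n\theta_{n-j}D_\tau^\alpha v^j=v^n-v^0,
\end{equation*}
valid for an arbitrary grid function $v$; this is the uniform-mesh form of \cite[Lemma 2.1, Eq.(2.8)]{Liao}.

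Next I would substitute the test function $v(t)=t^\alpha$, so that $v^0=0$ and $v^n=t_n^\alpha$, which reduces the identity to $\sum_{j=1}^n\theta_{n-j}D_\tau^\alpha(t^\alpha)\big|_{t_j}=t_n^\alpha$. The crux is then a one-sided bound on the discrete operator, namely $D_\tau^\alpha(t^\alpha)\big|_{t_j}\ge\Gamma(1+\alpha)$. Since the continuous Caputo derivative satisfies $D_t^\alpha t^\alpha=\Gamma(1+\alpha)$ exactly, this amounts to checking that the L1 quadrature overestimates the true derivative for the concave integrand $t^\alpha$. Writing $D_\tau^\alpha v^j$ as the exact Caputo derivative of the piecewise-linear interpolant $\Pi v$ and setting $e=v-\Pi v$, I would note that $e\ge0$ on $[0,t_j]$ (the chord lies below a concave curve) and $e$ vanishes at every node; integrating $\int_0^{t_j}(t_j-s)^{-\alpha}e'(s)\,ds$ by parts, the boundary contributions drop out (using $e(0)=0$ and $e(s)=O(t_j-s)$ near $s=t_j$), leaving
\begin{equation*}
	D_\tau^\alpha v^j-\Gamma(1+\alpha)=\frac{\alpha}{\Gamma(1-\alpha)}\int_0^{t_j}e(s)(t_j-s)^{-\alpha-1}\,ds\ge0,
\end{equation*}
which is exactly the statement that the truncation error has the favorable sign.

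Finally, since the $\theta_{n-j}$ are nonnegative (indeed positive and monotonically decreasing by Lemma \ref{lemma2.1}), I would factor the constant lower bound out of the telescoping identity to obtain $t_n^\alpha=\sum_{j=1}^n\theta_{n-j}D_\tau^\alpha(t^\alpha)\big|_{t_j}\ge\Gamma(1+\alpha)\sum_{j=1}^n\theta_{n-j}$, and dividing by $\Gamma(1+\alpha)$ yields $\sum_{j=1}^n\theta_{n-j}\le t_n^\alpha/\Gamma(1+\alpha)$, which is the claim after the trivial relabeling $\sum_{j=1}^n\theta_{n-j}=\sum_{i=1}^n\theta_{n-i}$. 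I expect the main obstacle to be the second step: correctly identifying and justifying the sign of the L1 truncation error for $t^\alpha$ via the integration-by-parts argument above, since the telescoping identity follows directly from the definition of $\theta_n$ and the positivity of the kernels is already supplied by Lemma \ref{lemma2.1}.
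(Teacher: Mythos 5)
Your proposal is correct and takes essentially the same route as the paper: the paper's proof is a one-line appeal to Liao's Lemma 2.1, Eq.\ (2.8) with the test function $v(t)=t^\alpha$, and your argument is exactly a self-contained reconstruction of that cited result on the uniform mesh --- the complementary-kernel/telescoping identity $\sum_{j=1}^n\theta_{n-j}D_\tau^\alpha v^j=v^n-v^0$ (which is the paper's Lemma \ref{lemma2.4}), the one-sided bound $D_\tau^\alpha(t^\alpha)|_{t_j}\geq\Gamma(1+\alpha)$ obtained from the sign of the interpolation error of the concave function $t^\alpha$, and the positivity of the multipliers $\theta_n$. All three steps, including the integration-by-parts computation and the vanishing of the boundary terms, are sound, so your proof is a valid (and more explicit) substitute for the paper's citation-based proof.
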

	Let $\beta\geq0$ be constant. Define
	\begin{align*}
		K_{\beta,n}:=\begin{cases}1+\frac{1-n^{1-\beta}}{\beta-1}&\mathrm{if~}\beta\neq1,\\1+\ln n&\mathrm{if~}\beta=1.\end{cases}
	\end{align*}
	\begin{lemma} \label{lemma2.3}
		\cite[Lemma 5.10]{Chen1} For $1\leq n \leq N$, and a constant $\beta\geq0$, one has
		\begin{align*}
			\sum_{i=1}^ni^{-\beta}\theta_{n-i}\leq\tau^\alpha K_{\beta,n}\left(\frac n2\right)^{\alpha-1}+\frac{2^{\beta-\alpha}}\alpha\tau^\alpha n^{-\beta+\alpha}.
		\end{align*}
	\end{lemma}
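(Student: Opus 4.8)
The plan is to split the sum at the midpoint $i\approx n/2$ and estimate the two halves by complementary means, exploiting the fact that in each half exactly one of the two factors $i^{-\beta}$ and $\theta_{n-i}$ is controlled by its value at the midpoint, while the other factor is summed against a sharp integral comparison. Throughout I would rely only on the explicit bound $\theta_m\le\Gamma(2-\alpha)\tau^\alpha(m+1)^{\alpha-1}$ from Lemma~\ref{lemma2.1}.

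For the low half $1\le i\le\lceil n/2\rceil$ the index $n-i$ is large. Here $i\le\lceil n/2\rceil$ forces $n-i+1\ge\lfloor n/2\rfloor+1\ge n/2$, and since $\alpha-1<0$ Lemma~\ref{lemma2.1} gives $\theta_{n-i}\le\Gamma(2-\alpha)\tau^\alpha(n-i+1)^{\alpha-1}\le\Gamma(2-\alpha)\tau^\alpha(n/2)^{\alpha-1}$ uniformly in $i$. Pulling this factor out,
$$\sum_{i=1}^{\lceil n/2\rceil} i^{-\beta}\theta_{n-i}\le\Gamma(2-\alpha)\tau^\alpha\left(\frac n2\right)^{\alpha-1}\sum_{i=1}^{\lceil n/2\rceil} i^{-\beta}.$$
The remaining sum I would bound by the integral comparison $\sum_{i=1}^m i^{-\beta}\le 1+\int_1^m x^{-\beta}\,dx$, which evaluates to exactly $K_{\beta,m}$ in both cases $\beta\ne1$ and $\beta=1$; since a short derivative check shows $K_{\beta,m}$ is nondecreasing in $m$, this is at most $K_{\beta,n}$. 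This recovers the first term of the claim, up to the $\Gamma(2-\alpha)$ prefactor dealt with below.

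For the high half $\lceil n/2\rceil<i\le n$ the factor $i^{-\beta}\le(n/2)^{-\beta}$ (this is where $\beta\ge0$ enters), while now $n-i$ ranges over $0,\dots,\lfloor n/2\rfloor-1$. Reindexing by $j=n-i$ and again invoking Lemma~\ref{lemma2.1},
$$\sum_{\lceil n/2\rceil<i\le n} i^{-\beta}\theta_{n-i}\le\left(\frac n2\right)^{-\beta}\sum_{j=0}^{\lfloor n/2\rfloor-1}\theta_j\le\left(\frac n2\right)^{-\beta}\Gamma(2-\alpha)\tau^\alpha\sum_{k=1}^{\lfloor n/2\rfloor} k^{\alpha-1}.$$
Comparing the last sum of the decreasing function $x^{\alpha-1}$ with $\int_0^{\lfloor n/2\rfloor} x^{\alpha-1}\,dx=\lfloor n/2\rfloor^\alpha/\alpha\le(n/2)^\alpha/\alpha$ yields $\Gamma(2-\alpha)\,\tau^\alpha(n/2)^{\alpha-\beta}/\alpha=\Gamma(2-\alpha)\,2^{\beta-\alpha}\tau^\alpha n^{-\beta+\alpha}/\alpha$, i.e.\ the second term of the claim, again up to the $\Gamma(2-\alpha)$ factor.

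Finally I would clear the stray prefactor by observing that $\Gamma(2-\alpha)\le1$ for $0<\alpha<1$, since $\Gamma$ attains its maximum value $1$ on $[1,2]$ at the endpoints; adding the two halves then gives the stated inequality exactly. I do not anticipate any genuine obstacle here: the only point requiring care is the bookkeeping of the split, namely verifying that $i\le\lceil n/2\rceil$ indeed guarantees $n-i+1\ge n/2$ (so the low half is legitimately controlled at the midpoint) and that the two index ranges partition $\{1,\dots,n\}$ without overlap, including the degenerate case $n=1$ in which the high half is empty. These are elementary floor/ceiling checks rather than a substantive difficulty.
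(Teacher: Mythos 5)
Your proof is correct. Note, though, that the paper does not actually prove Lemma~\ref{lemma2.3}: it quotes the result verbatim from \cite[Lemma 5.10]{Chen1}, so there is no in-paper argument to compare against, and your write-up supplies a self-contained derivation that the paper itself omits. Your argument — split at $i=\lceil n/2\rceil$, bound $\theta_{n-i}$ by its midpoint value $\Gamma(2-\alpha)\tau^\alpha(n/2)^{\alpha-1}$ on the low half via Lemma~\ref{lemma2.1}, bound $i^{-\beta}$ by $(n/2)^{-\beta}$ on the high half, and close each half with an integral comparison — is the standard mechanism behind bounds of this type and is almost certainly how the cited source proceeds. The details all check out: $1+\int_1^m x^{-\beta}\,dx$ equals $K_{\beta,m}$ exactly in both the $\beta\neq1$ and $\beta=1$ cases, $K_{\beta,m}$ is indeed increasing in $m$ (its $m$-derivative is $m^{-\beta}>0$), the reindexed high-half sum $\sum_{k=1}^{\lfloor n/2\rfloor}k^{\alpha-1}\le\lfloor n/2\rfloor^{\alpha}/\alpha$ is a valid comparison for the decreasing integrand $x^{\alpha-1}$, and the absorption of the stray prefactor via $\Gamma(2-\alpha)\le1$ (convexity of $\Gamma$ on $[1,2]$ with value $1$ at both endpoints) is exactly what makes the stated constants come out clean. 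The degenerate cases ($n=1$ with an empty high half, $\beta=0$, $\beta=1$) also go through as you indicate, so there is no gap.
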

	Let $\{u^i\}_{i=0}^{N}$ be an arbitrary mesh function. We define an operator $E_t^\alpha$ as
	\begin{align}\label{eqution13}
		E_t^\alpha(u^0)=0,\quad E_t^\alpha(u^n)=\sum_{i=1}^n\theta_{n-i}u^i\quad\mathrm{for~}n=1,\ldots,N.
	\end{align}
	\begin{lemma} \label{lemma2.4}
		\cite[Lemma 2]{Cao} For arbitrary mesh function $\{u^i\}_{i=0}^{N}$, one has
		\begin{align*}
			E_t^\alpha(D_\tau^\alpha u^n)=u^n-u^0\quad\mathrm{for~}n=1,\ldots,N.
		\end{align*}
	\end{lemma}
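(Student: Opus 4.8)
The plan is to show that applying $E_t^\alpha$ to $D_\tau^\alpha u^n$ reproduces $u^n-u^0$ by expanding both operators explicitly and reading off the coefficient $B_k$ of each nodal value $u^k$. First I would rewrite the L1 operator \eqref{eqution4} in its ``direct'' form
$$D_\tau^\alpha u^i=\frac{1}{\mu}\Big(a_0u^i-\sum_{k=1}^{i-1}g_{i-k}u^k-a_{i-1}u^0\Big),\qquad g_j:=a_{j-1}-a_j\ (j\ge1),$$
so that the coefficient of $u^k$ inside $D_\tau^\alpha u^i$ is $a_0/\mu$ when $k=i$, $-g_{i-k}/\mu$ when $1\le k\le i-1$, and $-a_{i-1}/\mu$ when $k=0$ (here I use $a_{i-k-1}-a_{i-k}=g_{i-k}$).

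Next I would substitute this into $E_t^\alpha(D_\tau^\alpha u^n)=\sum_{i=1}^n\theta_{n-i}D_\tau^\alpha u^i$ and interchange the order of the two summations to collect, for each fixed $k$, the total coefficient $B_k$ of $u^k$. For $k=n$ only $i=n$ contributes, giving $B_n=\theta_0\,a_0/\mu=1$ by the definition $\theta_0=(a_0/\mu)^{-1}$. For an interior index $1\le k\le n-1$, the term $i=k$ contributes $\theta_{n-k}a_0/\mu$, while the terms $i=k+1,\dots,n$ contribute $-\tfrac1\mu\sum_{j=1}^{n-k}g_j\theta_{n-k-j}$ (after setting $j=i-k$); hence
$$B_k=\frac{1}{\mu}\Big(a_0\theta_{n-k}-\sum_{j=1}^{n-k}g_j\theta_{n-k-j}\Big)=0,$$
where the vanishing is exactly the defining recurrence $a_0\theta_m=\sum_{j=1}^m(a_{j-1}-a_j)\theta_{m-j}$ applied with $m=n-k$. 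Thus all interior coefficients drop out, which is the crucial structural role played by the multipliers $\theta_n$.

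It remains to evaluate $B_0$, the coefficient of $u^0$, which I expect to be the main obstacle since it is the one term not immediately killed by the recurrence. A direct route is to establish the convolution identity $\sum_{l=0}^m a_l\theta_{m-l}=\mu$ for every $m\ge0$ (equivalently $A(z)\Theta(z)=\mu/(1-z)$ at the level of generating functions, using $a_0-\sum_{j\ge1}g_jz^j=(1-z)A(z)$), whence $B_0=-\tfrac1\mu\sum_{l=0}^{n-1}\theta_{n-1-l}a_l=-1$. I would instead prefer the following shortcut: the L1 weights telescope, $\sum_{i=1}^{n-1}(a_{n-i-1}-a_{n-i})=a_0-a_{n-1}$, so $D_\tau^\alpha$ annihilates the constant sequence, i.e. $D_\tau^\alpha(1)=0$ for all $n$; consequently $E_t^\alpha(D_\tau^\alpha u^n)$ also annihilates the constant sequence, giving $\sum_{k=0}^nB_k=0$. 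Combined with $B_n=1$ and $B_k=0$ for $1\le k\le n-1$, this forces $B_0=-1$. Assembling the coefficients yields $E_t^\alpha(D_\tau^\alpha u^n)=u^n-u^0$, as claimed. The only delicate points are the index bookkeeping in the interchanged double sum and verifying the telescoping identity underlying the constant-annihilation step.
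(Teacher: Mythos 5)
Your proof is correct. Note that the paper itself gives no argument for this lemma at all --- it is quoted verbatim from the cited reference [Cao, Lemma 2] --- so there is no in-paper proof to compare against; your write-up supplies a self-contained verification. The coefficient bookkeeping is sound: $B_n=\theta_0 a_0/\mu=1$ follows from $\theta_0=\mu/a_0$; for $1\le k\le n-1$ the vanishing of $B_k=\frac1\mu\bigl(a_0\theta_{n-k}-\sum_{j=1}^{n-k}g_j\theta_{n-k-j}\bigr)$ is exactly the defining recurrence of the multipliers with $m=n-k$, and since $k\ge 1$ and $n\le N$ this stays within the range $1\le m\le N-1$ where the recurrence is stipulated. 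Your shortcut for $B_0$ is the nicest part: because the coefficients $B_k$ are universal (independent of the mesh function), evaluating the composed operator on the constant sequence --- which $D_\tau^\alpha$ annihilates by the telescoping identity $\sum_{i=1}^{n-1}(a_{n-i-1}-a_{n-i})=a_0-a_{n-1}$, including the empty-sum case $n=1$ --- forces $\sum_{k=0}^n B_k=0$ and hence $B_0=-1$, without ever proving the convolution identity $\sum_{l=0}^{m}a_l\theta_{m-l}=\mu$. That identity (equivalently, the ``complementary kernel'' property) is essentially what the cited reference establishes directly, so your route trades that computation for a linearity argument; both are valid, and yours is arguably more elementary.
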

	In this paper, we assume that the exact solution $u$ satisfies 
	\begin{align}\label{eqution8}
		\left|\frac{\partial^k u(x,y,t)}{\partial x^i\partial y^j }\right|\leq C, ~~ 0\leq i+j=k\leq 4,\quad
		\left|\frac{\partial^l u(x,y,t)}{\partial t^l}\right|\leq C(1+t^{\alpha-l}),\quad l=0,1,2,\quad (x,y)\in\bar{\Omega},~ 0<t\leq T.		
	\end{align}
\begin{remark}
	The time regularity of the solution  in \eqref{eqution8} is a realistic assumption for time-fractional derivative problems, \cite[Theorem 3.1]{Jin} proved such regularity in the case $l=1$ for nonlinear subdiffusion equations, and similar results can be found in \cite[Theorem 2.2]{Du} for  time-fractional Allen-Cahn equations.
\end{remark}
	Denote truncation error $r^n=D_\tau^\alpha u(x,y,t_n)-D_t^\alpha u(x,y,t_n)$, then from \cite[Lemma 1]{Gracia}, we can get the following lemma:
	\begin{lemma} \label{lemma2.5}
		Assume that $u$ satisfies the assumption of the regularity \eqref{eqution8}, one has
		\begin{align*}
			|r^n|\leq Cn^{-\min\{\alpha+1,2-\alpha\}},\quad1\leq n\leq N.
		\end{align*}
	\end{lemma}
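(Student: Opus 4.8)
The plan is to start from the integral form of the consistency error. Subtracting the exact Caputo derivative from the L1 approximation \eqref{eqution4}, and using that the difference quotient $(u^{i+1}-u^i)/\tau$ equals the mean of $u'$ over $[t_i,t_{i+1}]$, I would write
\[ r^n = \frac{1}{\Gamma(1-\alpha)}\sum_{i=0}^{n-1}\int_{t_i}^{t_{i+1}}\psi_i(s)\,(t_n-s)^{-\alpha}\,ds, \qquad \psi_i(s):=\frac{u^{i+1}-u^i}{\tau}-u'(s), \]
so that $r^n$ is a kernel-weighted sum of the local interpolation errors $\psi_i$. The decisive structural fact is that every $\psi_i$ has vanishing mean, $\int_{t_i}^{t_{i+1}}\psi_i(s)\,ds=0$, since both the difference quotient and $u'$ integrate to $u^{i+1}-u^i$; hence on each subinterval the kernel $(t_n-s)^{-\alpha}$ may be replaced by its deviation from any constant. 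I would then split the sum into three regimes that genuinely require different treatment: the initial interval $i=0$, where $u'$ is unbounded; the interior intervals $1\le i\le n-2$, where $u''$ is integrable and the kernel is smooth; and the interval $i=n-1$ adjacent to the kernel singularity at $s=t_n$.

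On an interior interval I would pull $\sup|\psi_i|$ out of the integral and use the zero mean to pay only for the variation of the kernel: choosing $c=(t_n-t_i)^{-\alpha}$,
\[ \Big|\int_{t_i}^{t_{i+1}}\psi_i\,(t_n-s)^{-\alpha}\,ds\Big| \le \Big(\sup_{[t_i,t_{i+1}]}|\psi_i|\Big)\,\tau^2\,\Big(\sup_{[t_i,t_{i+1}]}\alpha(t_n-s)^{-\alpha-1}\Big). \]
The first factor is controlled by the interpolation bound $\sup|\psi_i|\le C\tau\sup|u''|$ together with the second-derivative estimate in \eqref{eqution8}, giving $\le C\tau(i\tau)^{\alpha-2}$, while the kernel-derivative factor is $\alpha\big((n-i-1)\tau\big)^{-\alpha-1}$. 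Multiplying the three factors, all powers of $\tau$ cancel and the interior term is bounded by $C\,i^{\alpha-2}(n-i-1)^{-\alpha-1}$. Summing over $1\le i\le n-2$ and splitting at $i\approx n/2$ — using convergence of $\sum i^{\alpha-2}$ on the small-$i$ side and of $\sum_k k^{-\alpha-1}$ on the large-$i$ side — yields an interior contribution of order $C\big(n^{-(1+\alpha)}+n^{-(2-\alpha)}\big)$.

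The initial interval needs the opposite weighting, because $u'$ is only integrable near $s=0$: here I would keep the $L^1$ norm of $\psi_0$ and pull out the kernel variation. The first-derivative bound $|u'(s)|\le C(1+s^{\alpha-1})$ gives $\int_0^{t_1}|\psi_0(s)|\,ds\le C\tau^{\alpha}$, and the kernel varies by at most $\tau\sup_{[0,t_1]}\alpha(t_n-s)^{-\alpha-1}=C\tau\big((n-1)\tau\big)^{-\alpha-1}$; their product is again free of $\tau$ and contributes $C\,n^{-(1+\alpha)}$. The interval $i=n-1$ must be handled directly, since there the kernel is itself singular at the right endpoint and the variation trick is unavailable: I would bound $\sup|\psi_{n-1}|\le C\tau\big((n-1)\tau\big)^{\alpha-2}$ from \eqref{eqution8} and integrate the kernel explicitly, $\int_{t_{n-1}}^{t_n}(t_n-s)^{-\alpha}\,ds=\tau^{1-\alpha}/(1-\alpha)$, which after the $\tau$-powers cancel gives $C\,n^{-(2-\alpha)}$. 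Collecting the three regimes yields $|r^n|\le C\big(n^{-(1+\alpha)}+n^{-(2-\alpha)}\big)\le C\,n^{-\min\{\alpha+1,\,2-\alpha\}}$; the borderline step $n=1$, where the three regimes coincide in a single singular integral, is treated separately and bounded by a constant via a Beta-function identity.

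I expect the main obstacle to be the interaction of the two competing singularities — the solution's weak singularity at $s=0$ and the kernel's singularity at $s=t_n$ — which is exactly what forces the three-regime decomposition and singles out the adjacent interval $i=n-1$ as the one place where the zero-mean cancellation cannot be exploited. Getting the sharp exponents therefore hinges on tracking which quantity ($\sup|\psi_i|$ versus $\int|\psi_i|$, the kernel versus its variation) is pulled out in each regime, and on verifying that the $\tau$-powers cancel exactly so that a clean, $\tau$-free bound $C\,n^{-p}$ survives. This is precisely the estimate of \cite[Lemma 1]{Gracia}, so in practice I would either reproduce the argument above under the regularity \eqref{eqution8} or check that \eqref{eqution8} meets the hypotheses of that lemma and invoke it directly.
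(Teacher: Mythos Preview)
Your proposal is correct, and in fact it goes well beyond what the paper does: the paper offers no proof at all but simply cites \cite[Lemma~1]{Gracia} and states the bound, exactly as you anticipate in your closing sentence. Your three-regime decomposition (initial interval with $L^1$ control of $\psi_0$, interior intervals with the zero-mean cancellation against the kernel variation, and the terminal interval $i=n-1$ where the cancellation is unavailable) is the standard argument underlying that cited result, and the $\tau$-cancellations and summation estimates you record are accurate.
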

	\begin{lemma} \label{lemma2.6}
		\cite[Corollary 5.11]{Chen1} For $1\leq n \leq N$, one has
		\begin{align*}
			\sum_{i=1}^n|r^i|\theta_{n-i}\leq C\tau t_n^{\alpha-1}.
		\end{align*}
	\end{lemma}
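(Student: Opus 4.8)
The plan is to combine the pointwise truncation-error bound of Lemma \ref{lemma2.5} with the weighted-sum estimate of Lemma \ref{lemma2.3}. First I would set $\beta:=\min\{\alpha+1,2-\alpha\}$, so that Lemma \ref{lemma2.5} reads $|r^i|\le C i^{-\beta}$ for $1\le i\le N$. The decisive preliminary observation is that $\beta>1$ for every $\alpha\in(0,1)$: indeed $\alpha+1>1$ and $2-\alpha>1$, and more precisely $\beta\in(1,3/2]$, with $\beta-1=\alpha$ when $\alpha\le 1/2$ and $\beta-1=1-\alpha$ when $\alpha\ge 1/2$. In particular $\beta\neq 1$, so the branch $K_{\beta,n}=1+\frac{1-n^{1-\beta}}{\beta-1}$ of the definition applies.

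With this choice of $\beta$, applying Lemma \ref{lemma2.3} gives
\begin{align*}
\sum_{i=1}^n|r^i|\theta_{n-i}\le C\sum_{i=1}^n i^{-\beta}\theta_{n-i}\le C\tau^\alpha K_{\beta,n}\Big(\frac n2\Big)^{\alpha-1}+\frac{C\,2^{\beta-\alpha}}{\alpha}\,\tau^\alpha n^{-\beta+\alpha}.
\end{align*}
I would then estimate the two terms separately. For the first term, since $\beta>1$ we have $n^{1-\beta}\le 1$, hence $K_{\beta,n}\le 1+\frac{1}{\beta-1}\le C$ (a constant depending only on the fixed $\alpha$), and therefore $C\tau^\alpha K_{\beta,n}(n/2)^{\alpha-1}\le C\tau^\alpha n^{\alpha-1}$. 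For the second term, the inequality $\beta\ge 1$ forces $-\beta+\alpha\le\alpha-1$, so that $n^{-\beta+\alpha}\le n^{\alpha-1}$ for $n\ge 1$, giving $\frac{C\,2^{\beta-\alpha}}{\alpha}\tau^\alpha n^{-\beta+\alpha}\le C\tau^\alpha n^{\alpha-1}$ as well.

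Adding the two bounds and rewriting $\tau^\alpha n^{\alpha-1}=\tau\,(n\tau)^{\alpha-1}=\tau\,t_n^{\alpha-1}$ yields the asserted estimate. I do not anticipate a genuine obstacle in this argument; the only point demanding care is the strict inequality $\beta>1$, which simultaneously keeps $K_{\beta,n}$ uniformly bounded and guarantees that the $n^{-\beta+\alpha}$ contribution decays at least as fast as the target rate $n^{\alpha-1}$. It is precisely this strict bound that rules out any spurious logarithmic or slowly-decaying term and lets both pieces collapse into the single factor $\tau\,t_n^{\alpha-1}$.
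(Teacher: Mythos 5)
Your proof is correct, and it follows what is essentially the paper's own route: the paper simply cites \cite[Corollary 5.11]{Chen1}, which is obtained there precisely by combining the truncation-error bound (Lemma \ref{lemma2.5}) with the weighted-sum estimate \cite[Lemma 5.10]{Chen1} (stated here as Lemma \ref{lemma2.3}), exactly as you do. Your handling of the two terms, including the key observation that $\beta=\min\{\alpha+1,2-\alpha\}>1$ keeps $K_{\beta,n}$ bounded and makes $n^{-\beta+\alpha}\leq n^{\alpha-1}$, is sound.
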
	
	For simplicity, we define a space of grid functions as $W_h:=\left\{u_{j,k}|u_{j,k}\in\mathbb{C} ~\text{for}~ 0\leq j,k \leq M\right\}$ and $W_h^0:=\left\{ u_{j,k}|u_{j,k}\in W_h, u_{j,k}=0 \text{ if} j=0,M \text{ or } k=0,M\right\}$. As usual, for any grid function
	$u^n \in W_h^0$, we introduce the following notations,
	\begin{align*}
		&\delta_{x}^{2}u_{j,k}^{n}=\frac{u_{j-1,k}^{n}-2u_{j,k}^{n}+u_{j+1,k}^{n}}{h^{2}},\quad
		\delta_{y}^{2}u_{j,k}^{n}=\frac{u_{j,k-1}^{n}-2u_{j,k}^{n}+u_{j,k+1}^{n}}{h^{2}},\\
		&\Delta_hu_{j,k}^n=\delta_{x}^{2}u_{j,k}^{n}+\delta_{y}^{2}u_{j,k}^{n}.
	\end{align*}
	For any grid functions $u, v\in W_h^0$, we define discrete inner products and discrete norms as
	\begin{align*}
		& \left( u,v\right) :=h^2\sum_{j=1}^{M-1}\sum_{k=1}^{M-1}u_{j,k}\overline{v}_{j,k},\quad\|u\|:=\left(  u,u\right) ^{\frac{1}{2}}, \\
		&|u|_2:=\left( \Delta_{h}u,\Delta_{h}u\right)^{\frac{1}{2}},\quad \|u\|_\infty:=\max_{1\leq j,k\leq M}|u_{j,k}|,
	\end{align*}
	where $\overline{v}$ is the complex conjugate of $v$.\\
	\indent A standard five point difference approximation is used to discretize the Laplacian term $\Delta u_{j,k}^n$,
	\begin{align*}
		\Delta u_{j,k}^n=\Delta_h u_{j,k}^n+O(h^2).
	\end{align*}
	Let $U_{j,k}^n$ be the approximations of $u(x_j,y_k,t_n)$, then the problem \eqref{eqution1}--\eqref{eqution3} is now approximated by the following fully discrete problem:
	\begin{align}
		&\text{i}D_\tau^\alpha U_{j,k}^n+\Delta_h U_{j,k}^n+f(|U_{j,k}^{n-1}|^2)U_{j,k}^{n-1}=0,&& 1\leq j,k\leq M-1, 1\leq n\leq N,\label{eqution5}\\&U_{j,k}^{0}=u_0(x_j,y_k),&&1\leq j,k\leq M-1,\label{eqution6}\\&U_{j,k}^{n}=0,&&j=0,M~ or ~k=0,M, 0\leq n\leq N.\label{eqution7}
	\end{align}
	
	\section{Stability and convergence analysis}\label{section3}
	In this section, we aim to obtain the stability and local error estimate. Firstly, We introduce a discrete Gr\"{o}nwall inequality and other lemmas.
	\begin{lemma} \label{lemma3.1}
		\cite[Lemma 5.1]{Mariam} Let $\{u
		\}_{n=1}^N$ be a non-negative sequence, assume that there exists constant $\eta_1, \eta_2\in [0,1)$, and $b_1,b_2,b\geq0$, such that
		\begin{align*}
			u^n\leq b_1t_n^{-\eta_1}+b_2t_n^{-\eta_2}+b\tau\sum_{i=1}^{n-1}t_{n-i}^{\alpha-1}u^i\quad\mathrm{for~}n=1,\ldots,N.
		\end{align*}
		Then there exists a constant $C = C(\eta_1, \eta_2, \alpha, b,t_N )$ such that
		\begin{align*}
			u^n\leq C(b_1t_n^{-\eta_1}+b_2t_n^{-\eta_2}),\quad1\leq n\leq N.
		\end{align*}
	\end{lemma}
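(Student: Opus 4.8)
The plan is to read the hypothesis as a discrete fractional integral recursion and to bootstrap the singular forcing into a convergent Mittag-Leffler–type series. The whole difficulty concentrates in one discrete convolution estimate, which I would isolate first: for every exponent $\eta<1$ there is a constant $C_\eta$ depending only on $\eta$ and $\alpha$ with
\begin{align*}
\tau\sum_{i=1}^{n-1}t_{n-i}^{\alpha-1}t_i^{-\eta}\le C_\eta\,t_n^{\alpha-\eta},\qquad 1\le n\le N.
\end{align*}
This is the discrete analogue of the Beta-function identity $\int_0^{t}(t-s)^{\alpha-1}s^{-\eta}\,ds=B(\alpha,1-\eta)\,t^{\alpha-\eta}$. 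To prove it I would split the index range into $1\le i\le n/2$ and $n/2<i\le n-1$: on the first block $t_{n-i}^{\alpha-1}\le C\,t_n^{\alpha-1}$ while $\tau\sum_{i\le n/2}t_i^{-\eta}\le C\,t_n^{1-\eta}$, and on the second block $t_i^{-\eta}\le C\,t_n^{-\eta}$ while $\tau\sum t_{n-i}^{\alpha-1}\le C\,t_n^{\alpha}$; adding the two blocks yields the stated bound, uniformly in $n$ and $\tau$.

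The core argument is then an iteration. Writing the hypothesis as $u^n\le\phi^n+(\mathcal Lu)^n$ with forcing $\phi^n=b_1t_n^{-\eta_1}+b_2t_n^{-\eta_2}$ and the linear monotone operator $(\mathcal Lv)^n:=b\tau\sum_{i=1}^{n-1}t_{n-i}^{\alpha-1}v^i$, repeated substitution gives, for every $m\ge1$,
\begin{align*}
u^n\le\sum_{k=0}^{m-1}(\mathcal L^k\phi)^n+(\mathcal L^m u)^n.
\end{align*}
Applying the convolution estimate to a single power shows $(\mathcal L\,t^{-\eta})^n\le bC_\eta\,t_n^{\alpha-\eta}$, so each application of $\mathcal L$ improves the decay exponent by $\alpha$; iterating, $(\mathcal L^k\,t^{-\eta})^n\le b^k\bigl(\prod_{j=0}^{k-1}C_{\eta-j\alpha}\bigr)t_n^{k\alpha-\eta}$, where the product of Beta constants telescopes into Gamma factors of the type $\Gamma(1-\eta)/\Gamma(1+k\alpha-\eta)$. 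Summing over $k$ produces a series dominated by a Mittag-Leffler function $E_\alpha(bC\,t_n^\alpha)$, which converges and is bounded on $[0,t_N]$; since every term with $k\ge1$ carries a strictly larger power of $t_n$, the full series is bounded by $C(\eta,\alpha,b,t_N)\,t_n^{-\eta}$. Carrying this out for $\eta_1$ and $\eta_2$ separately and adding gives exactly $u^n\le C(b_1t_n^{-\eta_1}+b_2t_n^{-\eta_2})$, once the remainder $(\mathcal L^m u)^n$ is shown to vanish as $m\to\infty$; the prefactor $b^m/\Gamma(1+m\alpha)$ forces this, using only that $u^n$ is a priori finite for each fixed $n$.

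The hard part is the convolution estimate together with the uniform summability of the iterated constants. A naive strong induction with a single constant $K$ fails for large $t_N$: substituting the ansatz $u^n\le K\phi^n$ only returns $(1+bKC\,t_n^\alpha)\phi^n$, which cannot be closed unless $bC\,t_n^\alpha<1$. This is precisely why the Mittag-Leffler summation, rather than a one-step induction, is needed and why the final constant must be allowed to grow with $t_N$. The delicate technical points are keeping $C_\eta$ uniform as the iterated exponent $\eta-j\alpha$ decreases through $0$ and becomes negative (so that the corresponding power $t^{-(\eta-j\alpha)}$ is bounded rather than singular, which in fact only makes the estimate easier), and verifying that the telescoping product of Beta constants genuinely reassembles into a convergent Mittag-Leffler tail instead of diverging.
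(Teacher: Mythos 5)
Your overall strategy---reading the hypothesis as $u^n\le\phi^n+(\mathcal Lu)^n$ and summing the Neumann series $\sum_k(\mathcal L^k\phi)^n$ into a Mittag-Leffler bound---is the classical Henry/Dixon--McKee route for weakly singular Gr\"{o}nwall inequalities. Note that the paper does not prove Lemma \ref{lemma3.1} at all (it is quoted from \cite{Mariam}); the closest thing to an in-paper proof is Remark \ref{remark1}, which iterates the inequality \emph{once}, applies the sharp inequality $\tau\sum_{i=j+1}^{n-1}t_{n-i}^{\alpha-1}t_{i-j}^{\beta-1}\le B(\alpha,\beta)t_{n-j}^{\alpha+\beta-1}$ of \cite[Lemma 6.1]{Dixon86}, and then (for $\tfrac12<\alpha<1$, so that the new kernel exponent $2\alpha-1$ is nonnegative) finishes with the standard discrete Gr\"{o}nwall inequality. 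Your proposal, however, has a genuine gap at its single load-bearing step. The convolution estimate you prove by splitting at $i=n/2$ delivers $\tau\sum_{i=1}^{n-1}t_{n-i}^{\alpha-1}t_i^{-\eta}\le C_\eta t_n^{\alpha-\eta}$ with constants $C_\eta$ that are merely \emph{uniformly bounded} along the iterated exponents $\eta-j\alpha$ (roughly $2^{1-\alpha}/(1-\eta)+1/\alpha$, tending to about $1+1/\alpha$ once the exponent goes negative). They are not comparable to $B(\alpha,1-\eta+j\alpha)$, which decays like $(j\alpha)^{-\alpha}$. With only uniform constants, the $k$-th iterate is bounded by $(bC^\ast)^kt_n^{k\alpha-\eta}$ and your series is geometric in $bC^\ast t_n^\alpha$: it diverges as $\tau\to0$ (the number of nonvanishing iterates grows like $n$, so nilpotency of $\mathcal L$ does not save you) whenever $bC^\ast t_N^\alpha\ge1$, which is precisely the regime the lemma must cover, since $C$ is only required to be finite, not small. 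The telescoping into $\Gamma(1-\eta)/\Gamma(1+k\alpha-\eta)$ that you invoke is a property of the \emph{exact} Beta constants and cannot be obtained from your splitting: for large second exponent $\beta$ the splitting discards the factor $(t_i/t_n)^{\beta-1}$ that localizes the sum near $i=n$ and is responsible for the $\beta^{-\alpha}$ decay of $B(\alpha,\beta)$. You flag this as ``the delicate point,'' but the proof as written does not resolve it, and with your lemma it is false.

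There are two standard repairs. Either import (or prove, by comparing the Riemann sum with the Beta integral rather than by block splitting) the sharp discrete inequality of \cite[Lemma 6.1]{Dixon86}; with constants $B(\alpha,\beta+j\alpha)$ the product telescopes to $\Gamma(\alpha)^k\Gamma(\beta)/\Gamma(\beta+k\alpha)$, the Gamma function in the denominator beats $b^kt_n^{k\alpha}$, and your summation then closes verbatim for every $\alpha\in(0,1)$. Or keep your crude constants but abandon the infinite summation: iterate the inequality only finitely many times, until the composed kernel exponent $k\alpha+\alpha-1$ becomes nonnegative (once for $\alpha>\tfrac12$, in general $\lceil 1/\alpha\rceil-1$ times), at which point the kernel is bounded by $t_N^{k\alpha+\alpha-1}$ and the classical discrete Gr\"{o}nwall inequality yields the claim with a constant of the form $e^{Ct_N^{(k+1)\alpha}}$. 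This second route is exactly the structure of Remark \ref{remark1}, and it tolerates non-sharp constants precisely because only a bounded number of them are ever multiplied together.
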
	
	\begin{remark} \label{remark1}
		Let $\frac{1}{2}<\alpha<1$ in Lemma \ref{lemma3.1}, we can find  $C$ is $\alpha$-robust because the bound does not blow up when $\alpha$ approaches $1^-$.
		\begin{align*}
			u^{n}& \leq b_1t_n^{-\eta_1}+b_2t_n^{-\eta_2}+b\tau\sum_{i=1}^{n-1}t_{n-i}^{\alpha-1}\left(b_1t_i^{-\eta_1}+b_2t_i^{-\eta_2}+b\tau\sum_{j=1}^{i-1}t_{i-j}^{\alpha-1}u^j\right) \\
			&=b_1t_n^{-\eta_1}+b_2t_n^{-\eta_2}+bb_1\left(\tau\sum_{i=1}^{n-1}t_{n-i}^{\alpha-1}t_i^{-\eta_1}\right)+bb_2\left(\tau\sum_{i=1}^{n-1}t_{n-i}^{\alpha-1}t_i^{-\eta_2}\right) \\
			&+b^2\tau^2\sum_{i=1}^{n-1}\sum_{j=1}^{i-1}t_{n-i}^{\alpha-1}t_{i-j}^{\alpha-1}u^j.
		\end{align*}
		Using the inequality
		\begin{align*}
			\tau\sum_{i=j+1}^{n-1}t_{n-i}^{\alpha-1}t_{i-j}^{\beta-1}\leq B(\alpha,\beta)t_{n-j}^{\alpha+\beta-1},\quad 0<\alpha<1,~\beta>0
		\end{align*}
		which follows from \cite[Lemma 6.1]{Dixon86}, where $B(\alpha,\beta)$ is the Beta function, and reversing the order of summation, we get
		\begin{align*}
			u^{n}& \leq b_1t_n^{-\eta_1}+b_2t_n^{-\eta_2}+bb_1B(\alpha,1-\eta_1)t_n^{\alpha-\eta_1}+bb_2B(\alpha,1-\eta_2)t_n^{\alpha-\eta_2}+b^2\tau\sum_{j=1}^{n-2}u^j \left(\tau\sum_{i=j+1}^{n-1}t_{n-i}^{\alpha-1}t_{i-j}^{\alpha-1}\right)\\
			&\leq \left( b_1+bb_1B(\alpha,1-\eta_1)t_n^\alpha \right) t_n^{-\eta_1}+\left( b_2+bb_2B(\alpha,1-\eta_2)t_n^\alpha \right) t_n^{-\eta_2}+b^2B(\alpha,\alpha)\tau\sum_{j=1}^{n-2}t_{n-j}^{2\alpha-1}u^j\\
			&\leq \left( b_1+bb_1B(\alpha,1-\eta_1)t_n^\alpha \right) t_n^{-\eta_1}+\left( b_2+bb_2B(\alpha,1-\eta_2)t_n^\alpha \right) t_n^{-\eta_2}+b^2B(\alpha,\alpha)\tau t_{N}^{2\alpha-1}\sum_{j=1}^{n-2}u^j.			
		\end{align*}
		If $\eta_1 \geq \eta_2$, we set $z^n = t_n^{\eta_1}u^n$ so that
		\begin{align*}
			z^{n}& \leq \left( b_1+bb_1B(\alpha,1-\eta_1)t_n^\alpha \right)+\left( b_2+bb_2B(\alpha,1-\eta_2)t_n^\alpha \right) t_n^{\eta_1-\eta_2}+b^2B(\alpha,\alpha)\tau t_{N}^{2\alpha-1}t_n^{\eta_1}\sum_{j=1}^{n-2}t_j^{-\eta_1}z^j.			
		\end{align*}
		Finally, an application of the standard discrete Gr\"{o}nwall's inequality \cite[Lemma 1.4.2]{Quarteroni} implies
		\begin{align*}
			z^{n}& \leq e^{\frac{1}{1-\eta_1}b^2B(\alpha,\alpha)t_{N}^{2\alpha-1}t_n^{\eta_1}t_{n-1}^{1-\eta_1}}\left[  \left( b_1+bb_1B(\alpha,1-\eta_1)t_n^\alpha \right)+ \left( b_2+bb_2B(\alpha,1-\eta_2)t_n^\alpha \right) t_n^{\eta_1-\eta_2}\right]\\
			& \leq e^{\frac{1}{1-\eta_1}b^2B(\alpha,\alpha)t_{N}^{2\alpha}}\left[  \left( b_1+bb_1B(\alpha,1-\eta_1)t_n^\alpha \right)+ \left( b_2+bb_2B(\alpha,1-\eta_2)t_n^\alpha \right) t_n^{\eta_1-\eta_2}\right],
		\end{align*}
		then
		\begin{align*}
			u^{n}& \leq e^{\frac{1}{1-\eta_1}b^2B(\alpha,\alpha)t_{N}^{2\alpha}}\left[  \left( b_1+bb_1B(\alpha,1-\eta_1)t_n^\alpha \right)t_n^{-\eta_1}+ \left( b_2+bb_2B(\alpha,1-\eta_2)t_n^\alpha \right) t_n^{-\eta_2}\right].
		\end{align*}
		Thus, we have $$C=e^{\frac{1}{1-\eta_1}b^2B(\alpha,\alpha)t_{N}^{2\alpha}}\max\{1+bB(\alpha,1-\eta_1)t_n^\alpha,1+bB(\alpha,1-\eta_2)t_n^\alpha\}.$$
		If $\eta_1 < \eta_2$, we set $z^n = t_n^{\eta_2}u^n$, using the same method, we can obtain $$C=e^{\frac{1}{1-\eta_2}b^2B(\alpha,\alpha)t_{N}^{2\alpha}}\max\{1+bB(\alpha,1-\eta_1)t_n^\alpha,1+bB(\alpha,1-\eta_2)t_n^\alpha\}.$$
		Therefore, the constant $C$ is $\alpha$-robust.
	\end{remark}	
	\begin{lemma} \label{lemma3.2}
		\cite[p.13]{Zhou} For arbitrary grid function $u \in W_h^0$, one has
		\begin{align*}
			\|u\|_\infty\leq C\|u\|^{\frac12}(\|u\|+|u|_2)^{\frac12}.
		\end{align*}
	\end{lemma}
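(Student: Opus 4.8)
The plan is to diagonalize the discrete Laplacian $\Delta_h$ by the discrete sine transform and then run a frequency-splitting (Gagliardo--Nirenberg) argument on the coefficients. Since $u\in W_h^0$ vanishes on the boundary, the grid functions $\phi^{(p,q)}_{j,k}=\sin\frac{p\pi j}{M}\sin\frac{q\pi k}{M}$, $1\le p,q\le M-1$, form an orthogonal basis of $W_h^0$ consisting of eigenfunctions of $-\Delta_h$, with eigenvalues $\lambda_{p,q}=\frac{4}{h^2}\big(\sin^2\frac{p\pi}{2M}+\sin^2\frac{q\pi}{2M}\big)$. Writing $u=\sum_{p,q}c_{p,q}\phi^{(p,q)}$ and using the discrete orthogonality $\|\phi^{(p,q)}\|^2=L^2/4$ (which is independent of $h$ because $h^2M^2=L^2$), Parseval's identity gives $\|u\|^2=\frac{L^2}{4}\sum_{p,q}|c_{p,q}|^2$ and $|u|_2^2=\|\Delta_h u\|^2=\frac{L^2}{4}\sum_{p,q}\lambda_{p,q}^2|c_{p,q}|^2$. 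Since $|\sin|\le1$, the $\ell^\infty$ bound reduces to an $\ell^1$ bound on the coefficients: $\|u\|_\infty\le\sum_{p,q}|c_{p,q}|$.

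Next I would interpolate. Fix a threshold $\Lambda>0$ and split $\sum_{p,q}|c_{p,q}|$ into the low part $\lambda_{p,q}\le\Lambda$ and the high part $\lambda_{p,q}>\Lambda$. By Cauchy--Schwarz the low part is at most $\big(\#\{\lambda_{p,q}\le\Lambda\}\big)^{1/2}\big(\sum|c_{p,q}|^2\big)^{1/2}$ and the high part is at most $\big(\sum_{\lambda_{p,q}>\Lambda}\lambda_{p,q}^{-2}\big)^{1/2}\big(\sum\lambda_{p,q}^2|c_{p,q}|^2\big)^{1/2}$. If I can establish the mesh-uniform counting estimates $\#\{\lambda_{p,q}\le\Lambda\}\le C\Lambda$ and $\sum_{\lambda_{p,q}>\Lambda}\lambda_{p,q}^{-2}\le C\Lambda^{-1}$, then (up to the constant $L^2/4$) I obtain $\sum|c_{p,q}|\le C\big(\Lambda^{1/2}\|u\|+\Lambda^{-1/2}|u|_2\big)$, and optimizing over $\Lambda$ (choosing $\Lambda=|u|_2/\|u\|$) yields $\|u\|_\infty\le C\|u\|^{1/2}|u|_2^{1/2}$, which is even slightly stronger than the claimed bound since $|u|_2^{1/2}\le(\|u\|+|u|_2)^{1/2}$.

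The crux --- and the step I expect to be the main obstacle --- is proving the two eigenvalue estimates with constants independent of $h$; these encode the two-dimensional geometry and are precisely what prevents a spurious logarithmic factor (the $L^\infty$ embedding is borderline in 2D at the $H^1$ level, and it is the second-order control through $\Delta_h$ that saves the argument). Using the elementary inequality $\sin\theta\ge\frac{2}{\pi}\theta$ on $[0,\pi/2]$ gives $\lambda_{p,q}\ge\frac{4}{L^2}(p^2+q^2)$, so $\{\lambda_{p,q}\le\Lambda\}$ is contained in the set of lattice points with $p^2+q^2\le\frac{L^2}{4}\Lambda$, whose cardinality is bounded by the area of a quarter disk, giving $\#\{\lambda_{p,q}\le\Lambda\}\le C\Lambda$ uniformly in $h$. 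The tail estimate then follows from the counting estimate by a dyadic decomposition: summing over the shells $\lambda_{p,q}\in[2^k\Lambda,2^{k+1}\Lambda)$ contributes at most $C\,2^{k+1}\Lambda\cdot(2^k\Lambda)^{-2}=C'\,2^{-k}\Lambda^{-1}$, and summing the geometric series over $k\ge0$ gives $\sum_{\lambda_{p,q}>\Lambda}\lambda_{p,q}^{-2}\le C\Lambda^{-1}$.

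As an alternative one could avoid the spectral picture entirely and argue by a one-dimensional discrete Sobolev inequality applied successively in the $x$- and $y$-directions, combined with discrete summation by parts to pass from $\Delta_h$ to the separate operators $\delta_x^2,\delta_y^2$. This route is more elementary but requires carefully tracking the cross term, namely the identity $(\delta_x^2u,\delta_y^2u)=\|\delta_x\delta_yu\|^2\ge0$ obtained by summing by parts under the zero boundary conditions; I would expect the spectral route above to be the cleaner of the two.
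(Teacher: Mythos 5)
Your proof is correct, but there is nothing in the paper to compare it against: the paper does not prove Lemma~\ref{lemma3.2} at all, it simply cites it from Zhou's monograph on discrete functional analysis, where inequalities of this type are obtained by elementary one-dimensional discrete embedding and interpolation inequalities applied direction by direction with summation by parts --- essentially the ``alternative'' route you sketch at the end. Your spectral argument is a genuinely different, self-contained derivation, and every step checks out: the functions $\phi^{(p,q)}$ are indeed an orthogonal eigenbasis of $-\Delta_h$ on $W_h^0$ with $\|\phi^{(p,q)}\|^2=h^2(M/2)^2=L^2/4$ and eigenvalues $\lambda_{p,q}=\frac{4}{h^2}\bigl(\sin^2\frac{p\pi}{2M}+\sin^2\frac{q\pi}{2M}\bigr)$; Parseval turns $\|u\|$ and $|u|_2$ into weighted $\ell^2$ norms of the coefficients; the counting bound $\#\{\lambda_{p,q}\le\Lambda\}\le \frac{\pi L^2}{16}\Lambda$ follows from $\sin\theta\ge\frac{2}{\pi}\theta$ and a quarter-disk area comparison, uniformly in $h$; the dyadic shell argument gives the tail bound; and the optimization $\Lambda=|u|_2/\|u\|$ is legitimate for all $u\neq 0$ (the splitting holds for every $\Lambda>0$ even when one frequency range is empty, and $|u|_2=0$ forces $u=0$ since $-\Delta_h$ is positive definite on $W_h^0$). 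You in fact obtain the sharper Agmon-type bound $\|u\|_\infty\le C\|u\|^{\frac12}|u|_2^{\frac12}$, which implies the stated inequality, with $C$ depending only on $L$. What the spectral route buys is this sharpness and self-containedness; what it costs is generality --- it leans on the uniform square mesh, the square domain, and the homogeneous Dirichlet condition, which make the discrete sine functions exact eigenvectors of $\Delta_h$. Since that is exactly the setting of this paper, your proof is fully applicable here, whereas the discrete-functional-analysis argument in the cited reference covers more general meshes and domains.
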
	
	\begin{lemma} \label{lemma3.3}
		\cite[Lemma 3.3]{Liu} For arbitrary mesh function $u \in W_h^0$, there is
		\begin{align*}
			\|u\|_\infty\leq Ch^{-1}\|u\|.
		\end{align*}	
	\end{lemma}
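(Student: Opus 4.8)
The plan is to prove this inverse-type estimate directly from the definitions of the two norms, exploiting the elementary fact that a single nonnegative term in a sum is bounded by the whole sum. Writing out the discrete $L^2$ norm we have $\|u\|^2=h^2\sum_{j=1}^{M-1}\sum_{k=1}^{M-1}|u_{j,k}|^2$, whereas $\|u\|_\infty=\max_{1\le j,k\le M}|u_{j,k}|$. Because $u\in W_h^0$ vanishes whenever $j=0,M$ or $k=0,M$, the maximum defining $\|u\|_\infty$ is attained at some interior index $(j_0,k_0)$ with $1\le j_0,k_0\le M-1$ (otherwise $\|u\|_\infty=0$ and the inequality is trivial), so the two index ranges are compatible.

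First I would fix such a maximizing index $(j_0,k_0)$, so that $|u_{j_0,k_0}|=\|u\|_\infty$. Then, since every summand $|u_{j,k}|^2$ is nonnegative, retaining only the single term at $(j_0,k_0)$ gives the lower bound
$$\|u\|^2=h^2\sum_{j=1}^{M-1}\sum_{k=1}^{M-1}|u_{j,k}|^2\ge h^2|u_{j_0,k_0}|^2=h^2\|u\|_\infty^2.$$
Rearranging and taking square roots yields $\|u\|_\infty\le h^{-1}\|u\|$, which is precisely the claimed estimate, in fact with the explicit constant $C=1$.

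There is essentially no genuine obstacle in this argument; the only point worth emphasizing is the appearance of the factor $h^{-1}$, which is exactly the expected inverse-inequality scaling $h^{-d/2}$ in spatial dimension $d=2$ and arises here from the $h^2$ weight in the two-dimensional discrete inner product (the same reasoning in one space dimension would instead produce $h^{-1/2}$). In the subsequent convergence analysis this crude but $h$-explicit bound will be paired with the sharper interpolation-type estimate of Lemma \ref{lemma3.2} to control the $L^\infty$ norm of the error in terms of its discrete energy norms while tracking the dependence on $h$.
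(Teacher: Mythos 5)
Your proof is correct. There is actually nothing in the paper to compare it against: the authors do not prove this lemma at all, they simply quote it from \cite[Lemma 3.3]{Liu}. Your argument is the standard one and is complete --- since $\|u\|^2=h^2\sum_{j=1}^{M-1}\sum_{k=1}^{M-1}|u_{j,k}|^2$ is a sum of nonnegative terms, retaining only the maximizing term gives $\|u\|^2\ge h^2\|u\|_\infty^2$, which is the claim with the explicit constant $C=1$. You also correctly handle the one genuine subtlety: the maximum in $\|u\|_\infty$ is taken over $1\le j,k\le M$ while the sum runs only over interior indices, but for $u\in W_h^0$ the values with $j=M$ or $k=M$ vanish, so the maximum is either attained at an interior node or equals zero, in which case the inequality is trivial. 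Your scaling remark is likewise accurate: the factor $h^{-1}$ is the inverse-estimate scaling $h^{-d/2}$ in dimension $d=2$, inherited from the $h^2$ weight in the discrete inner product, and the same argument in one dimension would give $h^{-1/2}$. The net effect of your proof is to make the lemma self-contained instead of imported, with a sharper (explicit) constant than the generic $C$ in the statement.
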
	
	\textcolor{blue}{\begin{lemma} \label{lemma3.4}
		For any grid function $u \in W_h$, one has
		\begin{align*}
			\left\| D_\tau^\alpha u^n\right\| \leq \dfrac{2}{\tau^{\alpha}\Gamma(2-\alpha)}\max_{0\leq s\leq n}\|u^s\|,\quad n=1,2,\cdots,N.
		\end{align*}	
		\begin{proof}
			Firstly, it is easy to obtain $1=a_0> a_1>\cdots> a_N>0$, then we have
			\begin{align*}
				\|D_\tau^\alpha u^n\|& = \frac{1}{\mu}\left\| a_0u^{n}-\sum_{s=1}^{n-1}(a_{n-s-1}-a_{n-s})u^s-a_{n-1}u^0\right\| \\
				&\leq\frac{1}{\mu}\left( a_0\left\| u^{n}\right\| +\sum_{s=1}^{n-1}(a_{n-s-1}-a_{n-s})\left\| u^s\right\| +a_{n-1}\left\| u^0\right\| \right) \\
				&\leq\frac{1}{\mu}\left( a_0 +\sum_{s=1}^{n-1}(a_{n-s-1}-a_{n-s}) +a_{n-1} \right) \max_{0\leq s\leq n}\|u^s\|\\
				&=\frac{2a_0}{\mu}\max_{0\leq s\leq n}\|u^s\|= \dfrac{2}{\tau^{\alpha}\Gamma(2-\alpha)}\max_{0\leq s\leq n}\|u^s\|.
			\end{align*}		
			This proof is completed.
		\end{proof}
	\end{lemma}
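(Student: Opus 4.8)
The plan is to work directly from the explicit representation of the L1 operator in \eqref{eqution4} and to exploit the sign structure of the weights $a_i$. The first step is to record the elementary monotonicity and positivity of these coefficients: since $g(x)=x^{1-\alpha}$ is strictly increasing and strictly concave on $[0,\infty)$ for $0<\alpha<1$, its consecutive forward differences $a_i=g(i+1)-g(i)$ are strictly positive and strictly decreasing, giving $1=a_0>a_1>\cdots>a_N>0$. In particular every combining weight $a_{n-i-1}-a_{n-i}$ appearing in the sum is nonnegative, which is exactly what makes the triangle inequality sharp.

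Next I would apply the triangle inequality to $\|D_\tau^\alpha u^n\|=\tfrac1\mu\bigl\|a_0u^n-\sum_{i=1}^{n-1}(a_{n-i-1}-a_{n-i})u^i-a_{n-1}u^0\bigr\|$. Because all the scalar weights are nonnegative, pulling the norm inside yields the estimate $\|D_\tau^\alpha u^n\|\le \tfrac1\mu\bigl(a_0\|u^n\|+\sum_{i=1}^{n-1}(a_{n-i-1}-a_{n-i})\|u^i\|+a_{n-1}\|u^0\|\bigr)$. Bounding each $\|u^i\|$ and $\|u^0\|$ by $\max_{0\le s\le n}\|u^s\|$ and factoring that maximum out reduces the problem to summing the weights $a_0+\sum_{i=1}^{n-1}(a_{n-i-1}-a_{n-i})+a_{n-1}$. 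Reindexing the middle sum by $k=n-i$ converts it into the telescoping sum $\sum_{k=1}^{n-1}(a_{k-1}-a_k)=a_0-a_{n-1}$, so the total weight collapses to $a_0+(a_0-a_{n-1})+a_{n-1}=2a_0=2$, using $a_0=1$. Substituting $\mu=\tau^\alpha\Gamma(2-\alpha)$ then produces the claimed bound.

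The computation is essentially routine; the only point requiring genuine care is the verification that the weights have a definite sign, since that is what permits the triangle inequality to be applied without loss and guarantees the sharp constant $2/(\tau^\alpha\Gamma(2-\alpha))$ rather than a larger one. I therefore expect the concavity argument for monotonicity of the $a_i$ and the telescoping identity to be the main (albeit minor) obstacles, while everything else follows immediately from the definitions. I would also remark that the argument is insensitive to whether $u$ lies in $W_h$ or $W_h^0$, so the statement holds for any grid function as claimed.
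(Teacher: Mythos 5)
Your proposal is correct and follows essentially the same route as the paper's own proof: the monotonicity $1=a_0>a_1>\cdots>a_N>0$ (which the paper asserts without the concavity justification you supply), the triangle inequality with nonnegative weights, and the telescoping of $\sum_{i=1}^{n-1}(a_{n-i-1}-a_{n-i})$ to $a_0-a_{n-1}$, yielding the constant $2a_0/\mu$. The only difference is cosmetic: you spell out why the weights are nonnegative and why this matters for obtaining the constant $2$ rather than something larger.
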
} 
	The stability of scheme \eqref{eqution5}--\eqref{eqution7} is demonstrated in the following theorem. Assume that $\widetilde{U}_{j,k}^n$ is the solution of the following scheme with initial value $\widetilde{u}_0$:
	\begin{align}
		&\text{i}D_\tau^\alpha \widetilde{U}_{j,k}^n+\Delta_h \widetilde{U}_{j,k}^n+f(|\widetilde{U}_{j,k}^{n-1}|^2)\widetilde{U}_{j,k}^{n-1}=0,&&(x_j,y_k)\in\Omega_h, 1<n\leq N,\label{eqution15a}\\&\widetilde{U}_{j,k}^{0}=\widetilde{u}_0(x_j,y_k),&&(x_j,y_k)\in\Omega_h,\label{eqution15b}\\&\widetilde{U}_{j,k}^{n}=0,&&(x_j,y_k)\in\partial\Omega_h, 0\leq n\leq N.\label{eqution15c}
	\end{align}
	\begin{theorem}\label{theorem 3.1}
		The fully discrete scheme \eqref{eqution5}--\eqref{eqution7} is unconditionally stable with respect to initial value. Suppose $U_{j,k}^n,\widetilde{U}_{j,k}^n$ be the solutions of schemes \eqref{eqution5}--\eqref{eqution7},  \eqref{eqution15a}--\eqref{eqution15c} respectively, then for $n = 1, \dots , N$, one has
		\begin{align}\label{eqution17}
			\left\| U^n-\widetilde{U}^n\right\|\leq C\left\| u_0-\widetilde{u}_0\right\|.
		\end{align}			
	\end{theorem}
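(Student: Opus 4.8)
The plan is to carry out a discrete energy argument on the error. Setting $e^n = U^n - \widetilde{U}^n$ and subtracting \eqref{eqution15a}--\eqref{eqution15c} from \eqref{eqution5}--\eqref{eqution7}, the error satisfies
$$\text{i}D_\tau^\alpha e^n + \Delta_h e^n + g^{n-1} = 0,\qquad 1\le n\le N,$$
with $e^0 = u_0 - \widetilde{u}_0$ on the grid, homogeneous boundary values, and $g^{n-1} := f(|U^{n-1}|^2)U^{n-1} - f(|\widetilde{U}^{n-1}|^2)\widetilde{U}^{n-1}$. First I would pair this identity with $e^n$ in the discrete inner product $(\cdot,\cdot)$. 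By summation by parts together with the homogeneous boundary condition, $(\Delta_h e^n,e^n)$ is a non-positive \emph{real} number, so taking the imaginary part of the whole equation removes the Laplacian contribution and leaves
$$\text{Re}(D_\tau^\alpha e^n,e^n) + \text{Im}(g^{n-1},e^n) = 0.$$

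Next I would establish the discrete fractional coercivity inequality $\text{Re}(D_\tau^\alpha e^n,e^n)\ge \|e^n\|\,D_\tau^\alpha\|e^n\|$. This follows directly from the representation \eqref{eqution4}: since $1=a_0>a_1>\cdots>0$, all coefficients $a_{n-i-1}-a_{n-i}$ and $a_{n-1}$ are positive, so bounding each cross term by $\text{Re}(e^i,e^n)\le\|e^i\|\|e^n\|$ factors out a common $\|e^n\|$ and reproduces $D_\tau^\alpha$ acting on the scalar sequence $\|e^n\|$ (the same manipulation as in the proof of Lemma \ref{lemma3.4}). Combined with $|\text{Im}(g^{n-1},e^n)|\le\|g^{n-1}\|\|e^n\|$, this yields, after dividing by $\|e^n\|$ (the case $\|e^n\|=0$ being trivial, since then $D_\tau^\alpha\|e^n\|\le 0\le\|g^{n-1}\|$),
$$D_\tau^\alpha\|e^n\| \le \|g^{n-1}\|.$$

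The key step is to control the nonlinear difference by $\|e^{n-1}\|$. Writing $F(z):=f(|z|^2)z$, one has $F(z_1)-F(z_2) = f(|z_1|^2)(z_1-z_2) + [f(|z_1|^2)-f(|z_2|^2)]z_2$, and since $f\in C^1(\mathbb{R}^+)$ the mean value theorem gives $|f(|z_1|^2)-f(|z_2|^2)|\le C\,\big||z_1|^2-|z_2|^2\big|\le C(|z_1|+|z_2|)|z_1-z_2|$. Hence $F$ is Lipschitz on every ball, and provided $\|U^{n-1}\|_\infty$ and $\|\widetilde{U}^{n-1}\|_\infty$ are bounded by a constant $K$, one gets $\|g^{n-1}\|\le L(K)\|e^{n-1}\|$. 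Securing this uniform $\|\cdot\|_\infty$ bound on the two discrete solutions is the main obstacle: I expect to handle it by an a priori inductive argument using the inverse-type estimates Lemma \ref{lemma3.2} and Lemma \ref{lemma3.3}, and this is precisely the place where a grid-ratio restriction could threaten to enter and must be avoided.

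Granting $\|g^{n-1}\|\le C\|e^{n-1}\|$, the conclusion follows by inverting the fractional operator. Applying $E_t^\alpha$ to $D_\tau^\alpha\|e^n\|\le C\|e^{n-1}\|$ and using Lemma \ref{lemma2.4} together with the positivity of the multipliers $\theta_{n-i}$ gives
$$\|e^n\| \le \|e^0\| + C\sum_{i=1}^n\theta_{n-i}\|e^{i-1}\|.$$
By Lemma \ref{lemma2.1} one has $\theta_{n-1-i}\le\Gamma(2-\alpha)\,\tau\, t_{n-i}^{\alpha-1}$, so after re-indexing and absorbing the bounded factor $\theta_{n-1}$ this reads $\|e^n\|\le C\|e^0\| + C\tau\sum_{i=1}^{n-1}t_{n-i}^{\alpha-1}\|e^i\|$. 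A final application of the discrete Gr\"onwall inequality Lemma \ref{lemma3.1} (with $\eta_1=\eta_2=0$, $b_1=C\|e^0\|$, $b_2=0$) yields $\|e^n\|\le C\|e^0\| = C\|u_0-\widetilde{u}_0\|$, which is \eqref{eqution17}.
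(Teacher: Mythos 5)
Your skeleton coincides with the paper's proof step for step: the same error equation, the same imaginary-part trick that removes $(\Delta_h e^n,e^n)$, the same coercivity bound $\mathrm{Re}(D_\tau^\alpha e^n,e^n)\ge \|e^n\|\,D_\tau^\alpha\|e^n\|$ obtained from the sign pattern of the coefficients $a_i$, the same mean-value-theorem bound on the nonlinear difference, and the same conclusion via $E_t^\alpha$, Lemma \ref{lemma2.4}, Lemma \ref{lemma2.1} and the Gr\"onwall inequality of Lemma \ref{lemma3.1}. However, the step you flag as ``the main obstacle'' --- the uniform bounds $\|U^{n-1}\|_\infty\le C$ and $\|\widetilde{U}^{n-1}\|_\infty\le C$ --- is left as a genuine gap in your write-up: you only say you ``expect'' to close it by an a priori inductive argument using Lemmas \ref{lemma3.2} and \ref{lemma3.3}. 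The paper does not run any such induction inside the stability proof; it closes this step by forward-citing the bound $\|U^n\|_\infty\le\max_{1\le n\le N}\|u^n\|_\infty+1$ of \eqref{eqution14}, which is proved later and independently in Theorem \ref{theorem 3.2} (whose induction does use Lemmas \ref{lemma3.2} and \ref{lemma3.3}, exactly the tools you name, but in the convergence analysis, by comparing $U$ with the exact solution $u$).

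The distinction matters because the induction you sketch cannot work as stated for $\widetilde{U}$. Theorem \ref{theorem 3.2} obtains the $L^\infty$ bound by showing that the error against a \emph{regular exact solution} is small ($\|e^l\|_\infty\le 1$); in the stability setting the perturbation $u_0-\widetilde{u}_0$ is arbitrary, not small, so there is nothing to bootstrap from --- you cannot deduce $\|\widetilde{U}^n\|_\infty\le C$ from a stability estimate without already having the stability estimate you are trying to prove. The only consistent reading, and the one the paper adopts implicitly, is that $\widetilde{U}$ is the discrete solution of a perturbed problem whose exact solution also satisfies the regularity assumption \eqref{eqution8}, so that Theorem \ref{theorem 3.2} applies to $U$ and $\widetilde{U}$ separately. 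If you make that hypothesis explicit and replace your ``expected'' induction by a citation of Theorem \ref{theorem 3.2} for both discrete solutions, the rest of your argument is complete and matches the paper.
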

	\begin{proof}
		Let $\widetilde{e}_{j,k}^n=U_{j,k}^n-\widetilde{U}_{j,k}^n$, then $\widetilde{e}_{j,k}^0=u_0(x_j,y_k)-\widetilde{u}_0(x_j,y_k)$, subtracting the discrete scheme \eqref{eqution15a} from \eqref{eqution5} gives the following equation:
		\begin{align}\label{eqution16}
			\text{i}D_\tau^\alpha \widetilde{e}_{j,k}^n+\Delta_h \widetilde{e}_{j,k}^n+\widetilde{R}_{j,k}^n=0,
		\end{align}
		where $\widetilde{R}_{j,k}^n=f(|U_{j,k}^{n-1}|^2)U_{j,k}^{n-1}-f(|\widetilde{U}_{j,k}^{n-1}|^2)\widetilde{U}_{j,k}^{n-1}$.
		Using Lagrange mean value theorem, we can get
		\begin{align*}
			\left\|\widetilde{R}^n \right\| &=\left\|\left(f(|U^{n-1}|^2)-f(|\widetilde{U}^{n-1}|^2)\right)U^{n-1}+f(|\widetilde{U}^{n-1}|^2)\left(U^{n-1}-\widetilde{U}^{n-1}\right)\right\|\\
			&=\left\|f'(\xi)\left(|U^{n-1}|^2-|\widetilde{U}^{n-1}|^2\right)U^{n-1}+f(|\widetilde{U}^{n-1}|^2)\left(U^{n-1}-\widetilde{U}^{n-1}\right)\right\|,
		\end{align*}
		where $\xi$ is a constant between $|U^{n-1}|^2$
		and $|\widetilde{U}^{n-1}|^2$,
		then due to $\| U^{n-1}\|_\infty\leq C$, $\| \widetilde{U}^{n-1}\|_\infty\leq C$ (which will be shown in \eqref{eqution14} of Theorem \ref{theorem 3.2}), it is easily seen that $\xi \leq C$. Thus, one has
		\begin{equation}\label{eqution20}
			\begin{aligned}
				\left\|\widetilde{R}^n \right\|
				&\leq\left\|f'(\xi)\left(U^{n-1}\left( \overline{\widetilde{e}^{n-1}}\right) +\left( \widetilde{e}^{n-1}\right)\overline{\widetilde{U}^{n-1}}\right)U^{n-1}\right\|+	\left\|f(|\widetilde{U}^{n-1}|^2)\left(\widetilde{e}^{n-1}\right)\right\|\\
				&\leq C \left\|\widetilde{e}^{n-1}\right\|.
			\end{aligned}
		\end{equation}
		Taking the inner product of \eqref{eqution16} with $\widetilde{e}^n_{j,k}$ and taking the imaginary part of the result, we arrive at
		\begin{align*}
			\mathrm{Re}(D_\tau^\alpha \widetilde{e}^n,\widetilde{e}^n) = -\mathrm{Im}\left( \widetilde{R}^n,\widetilde{e}^n\right).
		\end{align*}
		Then using the definition \eqref{eqution4} and Cauchy-Schwartz inequality, we have
		\begin{align*}
			\frac{1}{\mu}a_0\left\| \widetilde{e}^n\right\| ^2-\frac{1}{\mu}\sum_{i=1}^{n-1}(a_{n-i-1}-a_{n-i})\left\| \widetilde{e}^i\right\|\left\| \widetilde{e}^n\right\|-\frac{1}{\mu}a_{n-1}\left\| \widetilde{e}^0\right\|\left\| \widetilde{e}^n\right\|\leq\left\| \widetilde{R}^n\right\|\left\| \widetilde{e}^n\right\|.
		\end{align*}
		Then applying \eqref{eqution20}, i.e. $\left\| \widetilde{R}^n\right\|\leq C\left\| \widetilde{e}^{n-1}\right\|$, thus
		\begin{align*}
			D^\alpha_\tau\left\| \widetilde{e}^n\right\|\leq\left\| \widetilde{R}^n\right\|\leq C\left\| \widetilde{e}^{n-1}\right\|.
		\end{align*}
		Then using the definition of operator $E_t^\alpha$ \eqref{eqution13} to both sides of the above inequality, one gets
		\begin{align*}
			\left\| \widetilde{e}^n\right\|&\leq \left\| \widetilde{e}^0\right\|+C\sum_{i=1}^n\theta_{n-i}\left\| \widetilde{e}^{i-1}\right\|\leq \left\| \widetilde{e}^0\right\|+C\sum_{i=0}^{n-1}\theta_{n-i-1}\left\| \widetilde{e}^{i}\right\|\\
			&\leq \left\| \widetilde{e}^0\right\|+C\theta_{n-1}\left\| \widetilde{e}^0\right\|+C\sum_{i=1}^{n-1}\theta_{n-i-1}\left\| \widetilde{e}^{i}\right\|.		
		\end{align*}
		Using Lemma \ref{lemma2.1}, it holds that
		\begin{align*}
			\left\| \widetilde{e}^n\right\|&\leq\left\| \widetilde{e}^0\right\|+C\Gamma(2-\alpha)\tau^\alpha n^{\alpha-1}\left\| \widetilde{e}^0\right\|+C\Gamma(2-\alpha)\tau^\alpha\sum_{i=1}^{n-1}(n-i)^{\alpha-1}\left\| \widetilde{e}^{i}\right\|\\
			&=(1+C\Gamma(2-\alpha)\tau^\alpha n^{\alpha-1})\left\| \widetilde{e}^0\right\|+C\Gamma(2-\alpha)\tau\sum_{i=1}^{n-1}t_{n-i}^{\alpha-1}\left\| \widetilde{e}^{i}\right\|\\
			&\leq(1+C\Gamma(2-\alpha)T^\alpha)\left\| \widetilde{e}^0\right\|+C\Gamma(2-\alpha)\tau\sum_{i=1}^{n-1}t_{n-i}^{\alpha-1}\left\| \widetilde{e}^{i}\right\|.
		\end{align*}
		This together with Lemma \ref{lemma3.1} gives
		\begin{align*}
			\left\| \widetilde{e}^n\right\|\leq(1+C\Gamma(2-\alpha)T^\alpha)\left\| \widetilde{e}^0\right\|\leq C\left\| \widetilde{e}^0\right\|.
		\end{align*}	
		This proof is completed.
	\end{proof}
	Next, we define and prove the truncation error, which will be used in convergence analysis.\\
	Clearly, $u_{j,k}^n$ satisfies the following equation,
	\begin{align}\label{eqution9}
		\text{i}D_\tau^\alpha u_{j,k}^n+\Delta_h u_{j,k}^n+f(|u_{j,k}^{n-1}|^2)u_{j,k}^{n-1}=P_{j,k}^{n},
	\end{align}
	for $n = 1,2,\cdots, N$, where
	\begin{align}
		P_{j,k}^{n}=\mathrm{i}\left(D_\tau^\alpha u_{j,k}^n-D_{t_n}^\alpha u\right)+\left(\Delta_h u_{j,k}^n-\Delta u_{j,k}^n\right)+\left(f(|u_{j,k}^{n-1}|^2)u_{j,k}^{n-1}-f(|u_{j,k}^n|^2)u_{j,k}^n\right).
	\end{align}
	Using Lagrange mean value theorem and the standard Taylor's expansion, we have
	\begin{align*}
		&\left\|f(|u^{n-1}|^2)u^{n-1}-f(|u^n|^2)u^n\right\|\\
		&=\left\|\left(f(|u^{n-1}|^2)-f(|u^n|^2)\right)u^{n-1}+f(|u^n|^2)\left( u^{n-1}-u^{n}\right)\right\|\\
		&=\left\|f'(\xi_1)\left(|u^{n-1}|^2-|u^n|^2\right)u^{n-1}+f(|u^n|^2)\left( u^{n-1}-u^{n}\right)\right\|\\
		&\leq\left\|f'(\xi_1)\left(u^{n-1}\left( \overline{u^{n-1}-u^{n}}\right) +\left( u^{n-1}-u^{n}\right)\overline{u^{n}}\right)u^{n-1}\right\|+	\left\|f(|u^n|^2)\left( u^{n-1}-u^{n}\right)\right\|\\
		&\leq C \left\|u^{n-1}-u^{n}\right\|\leq C \left\|\int_{t_{n-1}}^{t_n}u^{\prime}(s)ds\right\|\leq C\int_{t_{n-1}}^{t_n}s^{\alpha-1}ds\\
		&\leq \begin{cases}C\tau^\alpha,&n=1,\\ Ct_{n-1}^{\alpha-1}\tau,&2\leq n\leq N\end{cases}\\
		&\leq C\tau t_{n}^{\alpha-1}=C \tau^\alpha n^{\alpha-1},
	\end{align*}
	{\color{blue}where $f \in C^1(\mathbb{R}^+)$ is used, and $\xi_1$ is a constant between $|u^{n-1}|^2$ and $|u^n|^2$, then by assumption \eqref{eqution8}, it is easily seen that $\xi_1\leq C$.} Therefore, one has
	\begin{align}\label{eqution11}
		\left\|P^n\right\|\leq {\color{blue}C_u} \left( \left\| r^n\right\| +h^2+\tau^\alpha n^{\alpha-1}\right).
	\end{align}
	Next we prove the pointwise-in-time convergence of the fully discrete scheme \eqref{eqution5}--\eqref{eqution7}. For $n = 0,\dots,N$, let $e_{j,k}^n=u_{j,k}^n-U_{j,k}^n$, then $e_{j,k}^0=0$. Subtracting the fully discrete scheme \eqref{eqution5} from \eqref{eqution9} gives the following error equation:
	\begin{align}\label{eqution10}
		\text{i}D_\tau^\alpha e_{j,k}^n+\Delta_he_{j,k}^{n}+R_{j,k}^{n}=P_{j,k}^{n},
	\end{align}
	where
	\begin{align*}
		R_{j,k}^{n}=f(|u_{j,k}^{n-1}|^2)u_{j,k}^{n-1}-f(|U_{j,k}^{n-1}|^2)U_{j,k}^{n-1}.
	\end{align*}
	
	  \begin{theorem}\label{theorem 3.2}
		Suppose that the problem \eqref{eqution1}--\eqref{eqution3} has a unique solution $u =
		u(x, y, t)$ satisfying \eqref{eqution8}, then the fully discrete scheme \eqref{eqution5}--\eqref{eqution7} has a unique solution $U_{j,k}^n$. For small $\tau$ and $h$, we have
		\begin{align}\label{eqution14}
			\|u^n-U^n\|\leq {\color{blue}K}(\tau t_n^{\alpha-1}+h^2),\quad\|U^n\|_\infty\leq {\color{blue}\max_{1\leq n\leq N}\|u^n\|_\infty+1},\quad n=1,2,\cdots,N,
		\end{align}	
   {\color{blue} where $K$ is defined in \eqref{keyc}.}
	\end{theorem}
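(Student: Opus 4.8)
The plan is to settle solvability first and then establish both estimates in \eqref{eqution14} simultaneously by induction on the time level $n$, starting from $U^0=u_0$, for which $e^0=0$ and $\|U^0\|_\infty=\|u^0\|_\infty$ make both claims trivially true. Because the nonlinear term in \eqref{eqution5} is evaluated only at $U^{n-1}$, each step is a \emph{linear} system for $U^n$ of the form $(\mathrm{i}\mu^{-1}a_0 I+\Delta_h)U^n=g^{n}$, where $g^n$ is assembled from $U^0,\dots,U^{n-1}$. Testing the associated homogeneous system against $U^n$ and taking the imaginary part annihilates the real quantity $(\Delta_h U^n,U^n)$ and leaves $\mu^{-1}a_0\|U^n\|^2=0$, so $U^n=0$; hence the system is uniquely solvable and $U^n$ is well defined for every $n$.

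For the error analysis I would carry as induction hypothesis that, for all $m\le n-1$, both $\|e^m\|\le K(\tau t_m^{\alpha-1}+h^2)$ and $\|U^m\|_\infty\le \max_{1\le s\le N}\|u^s\|_\infty+1$ hold, with $K$ a constant depending only on the data. The second hypothesis is precisely what tames the nonlinearity: writing $R^n$ as in Theorem \ref{theorem 3.1} and applying the mean value theorem together with $f\in C^1(\mathbb{R}^+)$, the bounds on $\|U^{n-1}\|_\infty$ and, via \eqref{eqution8}, on $\|u^{n-1}\|_\infty$ confine the intermediate value $\xi$ to a fixed compact set, so that $|f'(\xi)|$ and $f(|U^{n-1}|^2)$ are bounded and $\|R^n\|\le C\|e^{n-1}\|$ with $C$ independent of $\tau,h,n$.

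I would then test the error equation \eqref{eqution10} against $e^n$ and take the imaginary part; as in Theorem \ref{theorem 3.1} the Laplacian term is real and drops out, yielding $\mathrm{Re}(D_\tau^\alpha e^n,e^n)=\mathrm{Im}(P^n-R^n,e^n)$, and the inequality $\mathrm{Re}(D_\tau^\alpha e^n,e^n)\ge \|e^n\|\,D_\tau^\alpha\|e^n\|$ coming from \eqref{eqution4} and Cauchy--Schwarz gives $D_\tau^\alpha\|e^n\|\le \|P^n\|+\|R^n\|\le \|P^n\|+C\|e^{n-1}\|$. Applying the solution operator $E_t^\alpha$ (Lemma \ref{lemma2.4}) and using $e^0=0$ turns this into $\|e^n\|\le \sum_{i=1}^n\theta_{n-i}\|P^i\|+C\sum_{i=1}^n\theta_{n-i}\|e^{i-1}\|$. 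The first sum I would control termwise from the truncation bound \eqref{eqution11}: the $\|r^i\|$ piece by Lemma \ref{lemma2.6}, the $h^2$ piece by Lemma \ref{lemma2.2}, and the $\tau^\alpha i^{\alpha-1}$ piece by Lemma \ref{lemma2.3} with $\beta=1-\alpha$, which together give $C(\tau t_n^{\alpha-1}+h^2)$; the second sum I would reduce, via Lemma \ref{lemma2.1}, to $C\tau\sum_{i=1}^{n-1}t_{n-i}^{\alpha-1}\|e^i\|$. This is exactly the hypothesis of the Gr\"{o}nwall inequality Lemma \ref{lemma3.1} with $\eta_1=1-\alpha$ and $\eta_2=0$, whose conclusion delivers $\|e^n\|\le K(\tau t_n^{\alpha-1}+h^2)$, the constant $K$ being the product of the resulting Gr\"{o}nwall factor and the data constants (this is the $K$ of the statement).

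The crux is to close the induction on the $L^\infty$ bound without imposing a grid-ratio restriction, and with a $K$ that has already been frozen by the previous step so that no circularity enters. With the $L^2$ error bound at level $n$ in hand, I would write $\|U^n\|_\infty\le \|u^n\|_\infty+\|e^n\|_\infty$ and estimate $\|e^n\|_\infty$ by the Gagliardo--Nirenberg-type inequality Lemma \ref{lemma3.2}, $\|e^n\|_\infty\le C\|e^n\|^{1/2}(\|e^n\|+|e^n|_2)^{1/2}$, rather than by the cruder inverse inequality Lemma \ref{lemma3.3}. The factor $|e^n|_2$ I would bound from the rearranged error equation $\Delta_h e^n=P^n-\mathrm{i}D_\tau^\alpha e^n-R^n$, invoking Lemma \ref{lemma3.4} for $\|D_\tau^\alpha e^n\|$; since the freshly proved bound makes $\|e^n\|$ small while $|e^n|_2$ stays controlled, the product should fall below $1$ once $\tau$ and $h$ are small, giving $\|e^n\|_\infty\le 1$ and hence $\|U^n\|_\infty\le \max_{s}\|u^s\|_\infty+1$, closing the induction. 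I expect this final step to be the genuine obstacle: the discrete Caputo derivative of a weakly singular solution on a uniform mesh forces $\|D_\tau^\alpha e^n\|$ to scale like $\tau^{-\alpha}\max_s\|e^s\|$ through Lemma \ref{lemma3.4}, so one must check carefully that feeding this into Lemma \ref{lemma3.2} still produces a smallness threshold on $\tau,h$ that depends only on the data and does not secretly couple the two mesh sizes.
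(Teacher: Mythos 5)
Your $L^2$ error analysis coincides with the paper's proof: the same induction hypothesis carrying both bounds, the same estimate $\|R^n\|\le C\|e^{n-1}\|$ via the mean value theorem and the inductive $L^\infty$ bound, the same testing/imaginary-part step giving $D_\tau^\alpha\|e^n\|\le\|P^n\|+\|R^n\|$, the same application of $E_t^\alpha$ with Lemmas \ref{lemma2.1}, \ref{lemma2.2}, \ref{lemma2.3}, \ref{lemma2.4}, \ref{lemma2.6}, and the same invocation of the Gr\"{o}nwall inequality Lemma \ref{lemma3.1} with $\eta_1=1-\alpha$, $\eta_2=0$. Your unique-solvability argument (imaginary part of the homogeneous linear system) is correct and is a nice supplement, since the paper asserts solvability without spelling it out.

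The genuine gap is exactly where you suspected it, and the route you commit to does not close it. You propose to bound $\|e^n\|_\infty$ by Lemma \ref{lemma3.2} alone, explicitly declining the \emph{cruder} inverse inequality Lemma \ref{lemma3.3}. Feeding Lemma \ref{lemma3.4} into the rearranged error equation gives $|e^n|_2\le C\left(1+\tau^{-\alpha}h^2\right)$, so Lemma \ref{lemma3.2} yields
\begin{align*}
\|e^n\|_\infty\le C\left(\tau^\alpha n^{\alpha-1}+h^2\right)^{\frac12}\left(1+\tau^{-\alpha}h^2\right)^{\frac12},
\end{align*}
and in the regime $h^2\gg\tau^{\alpha}$ the right-hand side behaves like $\tau^{-\alpha/2}h^2$: for instance along $h^2=\tau^{\alpha/4}$ both mesh sizes tend to zero while the bound grows like $\tau^{-\alpha/4}$. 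Making this $\le 1$ forces $h^2\lesssim\tau^{\alpha/2}$, which is precisely a grid-ratio restriction and defeats the unconditional claim of the theorem. The paper's resolution is a two-regime argument using \emph{both} lemmas: when $h^2\le\tau^\alpha l^{\alpha-1}$, the offending factor satisfies $\tau^{-\alpha}h^2\le l^{\alpha-1}\le1$ and Lemma \ref{lemma3.2} gives $\|e^l\|_\infty\le 2C_5\tau^{\alpha/2}\le1$ for small $\tau$; when $h^2>\tau^\alpha l^{\alpha-1}$, the inverse inequality Lemma \ref{lemma3.3} together with the freshly proved $L^2$ bound gives $\|e^l\|_\infty\le CKh^{-1}\left(\tau^\alpha l^{\alpha-1}+h^2\right)\le 2CKh\le1$ for small $h$. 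Neither lemma suffices alone (Lemma \ref{lemma3.3} by itself would require $\tau^\alpha\lesssim h$); it is the complementarity of the two regimes that decouples the smallness thresholds on $\tau$ and $h$. Without this case split your induction on the $L^\infty$ bound cannot be closed.
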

	\begin{proof}
		We use mathematical induction to prove the theorem. Firstly,
		when $n=1$, $R_{j,k}^1=0$, taking the inner product of \eqref{eqution10} with $e^1_{j,k}$
		\begin{align*}
			\text{i}\frac{1}{\mu}	\|e^1\|^2-\left(\nabla_h e^1,\nabla_h e^1 \right) =\left(P^1,e^1 \right),
		\end{align*}
		and taking the imaginary part of the equation above, then we arrive at
		\begin{align*}
			\frac{1}{\mu}\|e^1\|^2=\mathrm{Im}\left( P^1,e^1\right) \leq\|P^1\|\|e^1\|,
		\end{align*}
		{\color{blue}for small $\tau$ and $K\geq2\Gamma(2-\alpha)C_u$},
		\begin{align*}
			{\color{blue}\|e^1\|\leq\mu\|P^1\|\leq C_u\Gamma(2-\alpha)\tau^{\alpha}\left(1+h^2+\tau^\alpha \right) \leq 2\Gamma(2-\alpha)C_u\left(\tau^\alpha+h^2 \right)\leq K\left(\tau^\alpha+h^2 \right)} ,
		\end{align*}
		where Lemma \ref{lemma2.5} and equation \eqref{eqution11} are used.\\
		\indent In order to estimate $|e^1|_2,$ we rewrite \eqref{eqution10} with
		$n = 1$ into $$\Delta_he_{j,k}^{1}=-\text{i}\frac{1}{\mu} e_{j,k}^{1}+P_{j,k}^{1},$$
		then taking the discrete L2 norm, we have
		\begin{align*}
			|e^1|_2\leq\frac{1}{\mu}\|e^1\|+\|P^1\|\leq2\|P^1\|\leq {\color{blue}2C_u\left(1+h^2+\tau^\alpha \right)}.
		\end{align*}
		Utilizing Lemma \ref{lemma3.2}, {\color{blue}for small $\tau$ and $h$}, we have
		\begin{align*}
			\|e^{1}\|_{\infty}\leq C\|e^{1}\|^{\frac12}(\|e^{1}\|+|e^{1}|_2)^{\frac12}\leq {\color{blue}C(K(\tau^\alpha+h^2))^{\frac{1}{2}}\left( K(\tau^\alpha+h^2)+2C_u\left(1+h^2+\tau^\alpha \right)\right) ^{\frac{1}{2}}\leq 1}.
		\end{align*}
		Hence, together with assumption \eqref{eqution8}, one has		
		\begin{align}\label{new}		
			\|U^{1}\|_\infty=\left\| u^1-\left( u^1-U^1\right) \right\|_{\infty} \leq\|u^1\|_\infty+\|e^{1}\|_\infty \leq {\color{blue}\max_{1\leq n\leq N}\|u^n\|_\infty+1}.
		\end{align}
		Therefore, Theorem \ref{theorem 3.2} holds for $n = 1$.\\
		\indent Now, we assume Theorem \ref{theorem 3.2} holds for $n \leq l-1$, $l\geq2$, i.e.
		\begin{align}\label{eqution12}		
			\|u^n-U^n\|\leq {\color{blue}K}(\tau t_n^{\alpha-1}+h^2),\quad\|U^n\|_\infty\leq {\color{blue}\max_{1\leq n\leq N}\|u^n\|_\infty+1},\quad n=1,2,\cdots,l-1.
		\end{align}	
		Next, we will show that Theorem \ref{theorem 3.2} holds for $n =l$,
		\begin{equation}\label{eqution18}
			\begin{aligned}
				\|R^{l}\|&=\left\|f(|u^{l-1}|^2)u^{l-1}-f(|U^{l-1}|^2)U^{l-1}\right\|\\
				&=\left\|\left(f(|u^{l-1}|^2)-f(|U^{l-1}|^2)\right)u^{l-1}+f(|U^{l-1}|^2)\left( u^{l-1}-U^{l-1}\right)\right\|\\
				&=\left\|f'(\xi_2)\left(|u^{l-1}|^2-|U^{l-1}|^2\right)u^{l-1}+f(|U^{l-1}|^2)e^{l-1}\right\|\\
				&\leq\left\|f'(\xi_2)\left(u^{n-1}\overline{e^{l-1}} +e^{l-1}\overline{U^{l-1}}\right)u^{l-1}\right\|+\left\|	f(|U^{l-1}|^2)e^{l-1}\right\|\\
				&\leq {\color{blue}C_1}\left\| e^{l-1}\right\|,
			\end{aligned}
		\end{equation}
		{\color{blue}where $f \in C^1(\mathbb{R}^+)$ is used,} and $\xi_2$ is a constant between $|u^{l-1}|^2$ and $|U^{l-1}|^2$, by \eqref{eqution8} and \eqref{eqution12}, it is easily seen that $\xi_2\leq C$. {\color{blue}Hence, the constant $C_1$ is independent of mesh sizes and the induction variable $l$.}\\		
		\indent Let $n = l$ in \eqref{eqution10}, then by taking the inner product of \eqref{eqution10} with $e^l_{j,k}$ and taking the imaginary part of the result, we arrive at
		\begin{align*}
			\mathrm{Re}(D_\tau^\alpha e^l,e^{l}) = -\mathrm{Im}\left( R^{l},e^{l}\right) +\mathrm{Im}\left( P^{l},e^{l}\right),
		\end{align*}
		then using the definition \eqref{eqution4} and Cauchy-Schwartz inequality, we have
		\begin{align*}
			\frac{1}{\mu}a_0\left\| e^l\right\| ^2-\frac{1}{\mu}\sum_{i=1}^{l-1}(a_{l-i-1}-a_{l-i})\left\| e^i\right\|\left\| e^l\right\|\leq\left\| R^l\right\|\left\| e^l\right\|+\left\| P^l\right\|\left\| e^l\right\|.
		\end{align*}
		Then it follows that
		\begin{align*}
			D^\alpha_\tau\left\| e^l\right\|\leq\left\| R^l\right\|+\left\| P^l\right\|\leq {\color{blue}C_1}\left\| e^{l-1}\right\|+{\color{blue}C_u} \left( \left\| r^l\right\|+h^2+\tau^\alpha l^{\alpha-1}\right).
		\end{align*}
		Applying the definition of operator $E_t^\alpha$ \eqref{eqution13} to both sides of the inequality above, then by Lemmas \ref{lemma2.4}, \ref{lemma2.6}, \ref{lemma2.2}, \ref{lemma2.3}, and \ref{lemma2.1}, one has
		\begin{align*}
			\left\| e^l\right\|&\leq {\color{blue}C_u}\sum_{i=1}^l\theta_{l-i} \left( \left\| r^i\right\|+h^2+\tau^\alpha i^{\alpha-1}\right)+{\color{blue}C_1}\sum_{i=1}^l\theta_{l-i}\left\| e^{i-1}\right\|\\
			&\leq {\color{blue}C_uC}\tau t^{\alpha-1}_l+{\color{blue}C_u}\sum_{i=1}^l\theta_{l-i}\left( h^2+\tau^\alpha i^{\alpha-1}\right)+{\color{blue}C_1}\sum_{i=1}^l\theta_{l-i}\left\| e^{i-1}\right\|\\
			&\leq {\color{blue}C_uC}\tau t^{\alpha-1}_l+{\color{blue}C_u\frac{T^\alpha}{\Gamma(1+\alpha)}}h^2+{\color{blue}C_2}\tau^{2\alpha}l^{2\alpha-1}+{\color{blue}C_1}\sum_{i=1}^{l-1}\theta_{l-i-1}\left\| e^{i}\right\|\\
			&\leq {\color{blue}C_uC}\tau t^{\alpha-1}_l+{\color{blue}C_u\frac{T^\alpha}{\Gamma(1+\alpha)}}h^2+{\color{blue}C_2}\tau t_l^{2\alpha-1}+{\color{blue}C_1\Gamma(2-\alpha)}\sum_{i=1}^{l-1}\tau^\alpha(l-i)^{\alpha-1}\left\| e^{i}\right\|\\
			&\leq {\color{blue}C_uC}\tau t^{\alpha-1}_l+{\color{blue}C_u\frac{T^\alpha}{\Gamma(1+\alpha)}}h^2+{\color{blue}C_2T^\alpha}\tau t_l^{\alpha-1}+{\color{blue}C_1\Gamma(2-\alpha)}\tau\sum_{i=1}^{l-1}t_{l-i}^{\alpha-1}\left\| e^{i}\right\|\\
			&= {\color{blue}(C_uC+C_2T^\alpha)}\tau t^{\alpha-1}_l+{\color{blue}C_u\frac{T^\alpha}{\Gamma(1+\alpha)}}h^2+{\color{blue}C_1\Gamma(2-\alpha)}\tau\sum_{i=1}^{l-1}t_{l-i}^{\alpha-1}\left\| e^{i}\right\|.
		\end{align*}
		By using Lemma \ref{lemma3.1}, 
	we obtain
		\begin{align}\label{errorin}
			\left\| e^l\right\|\leq {\color{blue}C_3}\left( \tau t^{\alpha-1}_l+h^2\right){\color{blue}\leq K(\tau t_n^{\alpha-1}+h^2)},
		\end{align}
		{\color{blue}where the constant $C_3$ is just dependent on $C, C_1, C_2$ and $C_u$, but independent of mesh sizes and the induction variable $l$, and
			\begin{equation} \label{keyc}
				K=\max\{2\Gamma(2-\alpha)C_u,C_3\}.
		\end{equation}}
		In order to estimate $|e^1|_2,$ we rewrite \eqref{eqution10} with $n = l$ into the following form
		\begin{align*}
			\Delta_he_{j,k}^{l}=-\text{i}D_\tau^\alpha e_{j,k}^l-R_{j,k}^{l}+P_{j,k}^{l},
		\end{align*}
		then taking the discrete L2 norm, we have
		\begin{align*}
			|e^{l}|_2& \leq\|D_\tau^\alpha e^l\|+\|R^{l}\|+\|P^{l}\| \\
			&\leq {\color{blue}\dfrac{2}{\tau^{\alpha}\Gamma(2-\alpha)}\max_{1\leq s\leq l}\|e^s\|}+{\color{blue}C_1}\|e^{l-1}\|+{\color{blue}C_u}\left( l^{-min\{\alpha+1,2-\alpha\}}+h^2+\tau^\alpha l^{\alpha-1}\right) \\
			&\leq {\color{blue}\dfrac{2}{\tau^{\alpha}\Gamma(2-\alpha)}K\left( \tau^\alpha +h^2\right)} +{\color{blue}C_1K}\left( \tau t_{l-1}^{\alpha-1}+h^2\right)+{\color{blue}C_u}\left( l^{-min\{\alpha+1,2-\alpha\}}+h^2+\tau^\alpha l^{\alpha-1}\right) \\
			&= {\color{blue}\dfrac{2K}{\Gamma(2-\alpha)}\left( 1+\tau^{-\alpha}h^2\right)}+{\color{blue}C_1K}\left( \tau^\alpha (l-1)^{\alpha-1}+h^2\right)+{\color{blue}C_u}\left( l^{-min\{\alpha+1,2-\alpha\}}+h^2+\tau^\alpha l^{\alpha-1}\right)\\
			&{\color{blue}\leq C_4(1+\tau^{-\alpha}h^2)},
		\end{align*}
		where Lemma \ref{lemma3.4} is used. Then using Lemma \ref{lemma3.2}, we have
		\begin{align*}
			||e^{l}||_\infty&\leq C||e^{l}||^{\frac12}\left(||e^{l}||+|e^{l}|_2\right)^{\frac12}\\
			&\leq {\color{blue}CK^{\frac12}}\left(\tau^\alpha l^{\alpha-1}+h^2 \right)^{\frac12}\left({\color{blue}K}\left(\tau^\alpha l^{\alpha-1}+h^2 \right) +{\color{blue}C_4(1+\tau^{-\alpha}h^2)}\right)^{\frac12} \\	
			&\leq {\color{blue}C_5}\left(\tau^\alpha l^{\alpha-1}+h^2 \right)^{\frac12}{\color{blue}\left(1+\tau^{-\alpha}h^2 \right)^{\frac12}}.
		\end{align*}
		Thus, for small $\tau$ and $h$, when {\color{blue}$h^2\leq \tau^\alpha l^{\alpha-1}$},
		\begin{align*}
			||e^{l}||_\infty&\leq {\color{blue}\sqrt{2}C_5}\left(\tau^\alpha l^{\alpha-1} \right)^{\frac12}{\color{blue}\left(1 +\tau^{-\alpha} \tau^\alpha l^{\alpha-1} \right)^{\frac12}}\\
			&= {\color{blue}\sqrt{2}C_5}\left(\tau^\alpha l^{\alpha-1} \right)^{\frac12}{\color{blue}\left(1 +l^{\alpha-1} \right)^{\frac12}}\\
			&={\color{blue}\sqrt{2}C_5}\left(\tau^{\alpha}  l^{\alpha-1}+\tau^{\alpha}  l^{2(\alpha-1)} \right)^{\frac12}\\
			&\leq {\color{blue}2C_5\tau^{\frac{\alpha}{2}}\leq 1}.
		\end{align*}
       
		On the other hand, when {\color{blue}$h^2> \tau^\alpha l^{\alpha-1}$}, using Lemma \ref{lemma3.3}, by \eqref{errorin} we obtain
		\begin{align*}
			||e^{l}||_\infty\leq Ch^{-1}||e^{l}||\leq {\color{blue}CKh^{-1}\left(\tau^\alpha l^{\alpha-1}+h^2\right)\leq 2CKh\leq 1}.
		\end{align*}
		Hence, for both the case {\color{blue}$h^2\leq \tau^\alpha l^{\alpha-1}$} and the case {\color{blue}$h^2> \tau^\alpha l^{\alpha-1}$}, we always have
		\begin{align*}
			\|U^{l}\|_\infty=\left\| u^l-\left( u^l-U^{l}\right) \right\| _\infty\leq\|u^{l}\|_\infty+\|e^{l}\|_\infty {\color{blue}\leq\max_{1\leq l\leq N}\|u^{l}\|_\infty+1}.
		\end{align*}
		Therefore, \eqref{eqution14} holds for $n = l$. This completes the proof of Theorem \ref{theorem 3.2}.
	\end{proof}
	\begin{remark}
			In previous papers, Jin et al. \cite{Jin} studied time-fractional nonlinear parabolic partial differential equations, and Hu et al. \cite{Hu}  developed a two-grid finite element method for solving the nonlinear time-fractional Schr\"{o}dinger equation. But they all conduct global error analysis, and the convergence order is $\alpha$,  which can be referred to  \cite[Theorem 7]{Jin} and \cite[Theorem 4.1]{Hu} for specific details. In contrast, our paper focuses on pointwise-in-time error estimate, which can achieve the convergence order of 1 when $t$ is away from $0$.
	\end{remark}

	\section{Numerical experiments} \label{section4}
	In this section, we provide several numerical experiments to verify the order of local convergence. And we utilize L2 norm error for estimation in these numerical examples.
	
	\indent Denote global error $E_g$ and local error $E_l$:
	\begin{align*}
		E_g:=\max_{1\le n\le N}\left\| u(t_n)-U^n\right\|,\quad E_l=\left\| u(t_N)-U^N\right\|.
	\end{align*}
	\begin{example}\label{example4.1}
		Consider the following two-dimensional nonlinear time-fractional Schr\"{o}dinger equation
		\begin{align}\label{eqnum}
			{\rm{i}}D_t^\alpha u+\Delta u+|u|^2u=f,
		\end{align}
		with the exact solution
		\begin{align*}
			u(x,y,t)=(t^\alpha-1)(1+{\rm{i}})\sin(\pi x)\sin(\pi y),
		\end{align*}
		which exhibits typical initial weak singularity, and the force term $f$ can by computed by \eqref{eqnum}.
	\end{example}
	Taking $L = 1$, $T = 1$, and we take $M = \lceil N^\frac{1}{2}\rceil$ so that the temporal error dominates the spatial error. Table \ref{table4.1} shows the local errors and convergence orders with  $\alpha=0.3,0.5,0.7$, one can see that the results conform with Theorem \ref{theorem 3.2}.
	In addition, we also estimate the global error, the convergence order $\alpha$ is obtained in Table \ref{table4.1}. Moreover, we choose different grid ratios $\tau/h=0.1,10$ to test the unconditional convergence of our scheme. Those results from Table \ref{table4.2} confirm the convergence without any grid ratio given in Theorem \ref{theorem 3.2}.
	\begin{table}[H]
		\centering
		\caption{Local and global $L^2$ errors and rates of convergence for Example \ref{example4.1}. }
		\begin{tabular}{ccccccc}
			\toprule
			&     N    & 512 & 1024 & 2048 & 4096& 8192\\
			\midrule
			&$\alpha=0.3$  & 9.0586e-06 &4.5730e-06 &2.1972e-06 & 1.1167e-06 & 5.4866e-07\\
			&             &  & 0.9861 & 1.0575 & 0.9765 & 1.0252\\
			$E_l$ &$\alpha=0.5$  & 2.0332e-05 &1.0409e-05 & 5.0306e-06 & 2.5843e-06 & 1.2769e-06\\
			&          &  & 0.9660 & 1.0490 & 0.9610 & 1.0171\\
			&$\alpha=0.7$  &3.5230e-05 &1.8061e-05 & 8.7224e-06 & 4.4832e-06 & 2.2135e-06\\
			&       	  &  & 0.9640 & 1.0501 & 0.9602  & 1.0182\\
			&           &  &  & &  &\\
			
			&$\alpha=0.3$  &1.7320e-02  &1.4312e-02 &1.1843e-02 & 9.8220e-03 &8.1364e-03\\
			&	         & &0.2752 & 0.2732 &0.2700 &0.2716\\
			$E_g$ &$\alpha=0.5$  & 6.5287e-03 & 4.6379e-03 &3.3033e-03 & 2.3515e-03 & 1.6694e-03\\
			&	          & &0.4933 &0.4896 & 0.4903 & 0.4942\\
			&$\alpha=0.7$  &1.7573e-03 &1.0318e-03 & 6.2977e-04 & 3.8671e-04 & 2.3786e-04\\
			&       	  &  & 0.7682 & 0.7122 & 0.7036 & 0.7012\\		
			\bottomrule
		\end{tabular} \label{table4.1}
	\end{table}

\begin{table}[H]
		\centering
		\caption{Local errors under different grid ratios for Example \ref{example4.1} with $\alpha=0.5$.}
		\begin{tabular}{c|c|c}
			\hline
			grid ratio&step size&error\\
			\hline
			\multirow{2}{*}{$\frac{\tau}{h}=0.1$} & $\tau=0.01,h=0.1$ & 1.1053e-04\\
			\cline{2-3}
			& $\tau=0.005,h=0.05$ & 2.9888e-05 \\
			\hline
			\multirow{2}{*}{$\frac{\tau}{h}=10$} & $\tau=0.1,h=0.01$ & 2.3201e-04\\
			\cline{2-3}
			& $\tau=0.05,h=0.005$ & 9.2144e-05 \\
			\hline
		\end{tabular} \label{table4.2}
\end{table}

	\begin{example}\label{example4.2}
		Consider the following two-dimensional nonlinear time fractional Schr\"{o}dinger equation
		\begin{align*}
			{\rm{i}}D_t^\alpha u+\Delta u+|u|^2u=0,
		\end{align*}
		with the initial condition
		\begin{align*}
			u(x,y,0)=sin(\pi x)sin(\pi y).
		\end{align*}
	\end{example}
	Due to the unknown exact solution of this Example \ref{example4.2}, the two-mesh method given in \cite{Farrell} is used to estimate the errors and convergence rates. The local error $e_L$ of the two-mesh are defined by
	\begin{align*}
		e_L=\left\| u^N-w^N\right\|,
	\end{align*}
	where $w^k$ is computed on the second temporal mesh $t_k = T (k/2N)$ with $0 \leq k \leq 2N$. We choose $M = 50$ to test
	the temporal convergence rate thus the temporal errors are dominated when $N$ is not too big. Those results from
	Table \ref{table4.3} confirm again the theoretical prediction given in Theorem \ref{theorem 3.2}.
	
	\begin{table}[H]
		\centering
		\caption{Local $L^2$ errors and rates of convergence for Example \ref{example4.2}. }
		\begin{tabular}{c|c|c|c|c|c|c}
			\hline
		
			\multirow{2}{*}{$N$} & \multicolumn{2}{c|}{$\alpha=0.3$} &
			\multicolumn{2}{c|}{$\alpha=0.5$} & \multicolumn{2}{c}{$\alpha=0.7$}\\
			\cline{2-7}
			{} & $e_{L}$ & $Rate$ & $e_{L}$ & $Rate$ & $e_{L}$ & $Rate$  \\
			\hline
			64 &4.3992e-05 &    &5.4087e-05 &    &4.5462e-05 & \\
			\hline
			128 &2.1787e-05 & 1.0138   &2.6774e-05 &  1.0144     &2.2562e-05 & 1.0108\\
			\hline
			256 &1.0843e-05 &1.0068 &1.3342e-05 &1.0048 &1.1262e-05 &1.0024 \\
			\hline
			512 &5.4089e-06 &1.0033 &6.6718e-06 &0.9999 &5.6242e-06 &1.0017 \\
			\hline
			1024 &2.7016e-06 &1.0015 &3.3396e-06 &0.9984 &2.8058e-06 &1.0032 \\
			\hline
		\end{tabular} \label{table4.3}
	\end{table}

\begin{example}\label{example4.3}
		Consider the following two-dimensional nonlinear time fractional Schr\"{o}dinger equation
		\begin{align*}
			{\rm{i}}D_t^\alpha u+\Delta u+|u|^2u=0,
		\end{align*}
		with the non-smooth initial condition
		\begin{align*}
			u(x,y,0)=\begin{cases}xsin(\pi y),&0\leq x\leq 0.5,0\leq y\leq 1,\\ 0,&0.5<x\leq 1,0\leq y\leq 1.\end{cases}
		\end{align*}
	\end{example}
	Similar to Example \ref{example4.2}, we utilize the two-mesh method to estimate the errors and convergence rates. We choose $M=50$ to test the temporal convergence rate with the non-smooth initial condition. The data presented in
	Table \ref{table4.4} is consistent with the theoretical prediction stated in Theorem \ref{theorem 3.2}.
	
	\begin{table}[H]
		\centering
		\caption{Local $L^2$ errors and rates of convergence for Example \ref{example4.3}. }
		\begin{tabular}{c|c|c|c|c|c|c}
			\hline
			\multirow{2}{*}{$N$} & \multicolumn{2}{c|}{$\alpha=0.3$} &
			\multicolumn{2}{c|}{$\alpha=0.5$} & \multicolumn{2}{c}{$\alpha=0.7$}\\
			\cline{2-7}
			{} & $e_{L}$ & $Rate$ & $e_{L}$ & $Rate$ & $e_{L}$ & $Rate$  \\
			\hline
			64 &1.0507e-05 &    &1.2904e-05 &    &1.0665e-05 & \\
			\hline
			128 &5.1978e-06 & 1.0154   &6.3725e-06 &  1.0179     &5.2610e-06 & 1.0195\\
			\hline
			256 &2.5851e-06 &1.0077 &3.1667e-06 &1.0089 &2.6121e-06 &1.0101 \\
			\hline
			512 &1.2890e-06 &1.0039 &1.5787e-06 &1.0042 &1.3009e-06 &1.0057 \\
			\hline
			1024 &6.4355e-07 &1.0022 &7.8826e-07 &1.0020 &6.4937e-07 &1.0024 \\
			\hline
		\end{tabular} \label{table4.4}
	\end{table}
	\section{Conclusion}
	In this paper, we have established a local error estimate of the L1 scheme for the two-dimensional nonlinear time fractional Schr\"{o}dinger equation with non-smooth solutions. Stability and pointwise-in-time convergence of the fully discrete scheme are given. It is proven that the proposed scheme achieves a convergence rate of $O(\tau t_n^{\alpha-1}+h^2)$ without imposing any time-step restrictions that depend on the spatial mesh size.
	
	\section*{Acknowledgements}
	The research is supported in part by Natural Science Foundation of Shandong Province under Grant ZR2023MA077, the National Natural Science Foundation of China (Nos.  11801026 and 12171141), and Fundamental Research Funds for the Central Universities (No. 202264006).
	
	\section*{Declaration of competing interest}
	The authors declare no competing interest.
	
	
	
	\bibliographystyle{plain}
	\bibliography{NonSchrL1v1ref}
	
	
	%
	%
	%
\end{document}